\newtheorem{teo}[equation]{Theorem}
\newtheorem{coro}[equation]{Corollary}
\newtheorem{lema}[equation]{Lemma}
\newtheorem{propo}[equation]{Proposition}
\numberwithin{equation}{section}
\newcommand{\w}{\omega}
\newcommand{\e}{\varepsilon}
\newcommand{\VV}{\mathbb{V}}
\newcommand{\WW}{\mathbb{W}}
\newcommand{\RR}{\mathbb{R}}
\newcommand{\NN}{\mathbb{N}}
\newcommand{\QQ}{\mathbb{Q}}
\renewcommand{\ss}{\mathcal{S}}
\newcommand{\rr}{\mathcal{R}}
\newcommand{\Rn}{{\RR^n}}
\newcommand{\dxi}{\partial_i}
\newcommand{\iie}{\int_\e^{\e^{-1}}}
\newcommand{\wt}{\widetilde}
\newcommand{\8}{\infty}
\renewcommand{\(}{\left(}
\renewcommand{\)}{\right)}
\newcommand{\SumNN}{\sum_{k=1}^\infty}
\newcommand{\Summ}{\sum_{j =1}^d}
\newcommand{\supp}{\mathrm{supp}}
\newcommand{\Har}{H_L^1(\RR^n)}
\newcommand{\VVV}{\VV_1}
\newcommand{\VVVV}{\VV_1^\bot}
\newcommand{\HarD}{H_\Delta^1(\RR^n)}
\newcommand{\LL}{L^1(\RR^n)}
\newcommand{\dtt}{\frac{dt}{\sqrt{t}}}
\newcommand{\qq}{\mathcal{Q}}
\newcommand{\ww}{\mathcal{W}}
\newcommand{\ie}{{i,\e}}
\begin{document}

\title[Hardy spaces related to Schr\"odinger operators]{Hardy spaces related to Schr\"odinger operators with potentials which are sums of $L^p$-functions }
\author[Jacek Dziuba\'nski ]{ Jacek Dziuba\'nski }
\author[Marcin Preisner]{ Marcin Preisner}
\thanks{The research was partially supported by Polish Government funds for science - grants N N201 397137 and N N201 412639, MNiSW}
\subjclass[2000]{42B30, 35J10 (primary), 42B35, 42B20 (secondary)}

\begin{abstract}
We investigate the Hardy space $H^1_L$ associated to the  Schr\"odinger operator $L=-\Delta +V$ on $\Rn$, where $V=\sum_{j=1}^d V_j$. We assume that each $V_j$ depends on variables from a linear subspace $\VV_j$ of $\Rn$, $\dim \VV_j \geq 3$, and $V_j$ belongs to $L^q(\VV_j)$ for certain $q$. We prove that there exist two distinct isomorphisms of $H^1_L$ with the classical Hardy space. As a corollary we deduce a specific atomic characterization of $H_L^1$. We also prove that the space $H_L^1$ is described by means of the Riesz transforms $\mathcal R_{L,i} = {\partial_i}L^{-1/2}$.
\end{abstract}

\address{Instytut Matematyczny\\
Uniwersytet Wroc\l awski\\
Pl. Grunwal\-dzki 2/4\\
50-384 Wroc\l aw \\
Poland} \email{jdziuban@math.uni.wroc.pl,
preisner@math.uni.wroc.pl (corresponding author)}

\maketitle

\begin{flushleft}
{\small
{\it Keywords:} Schr\"odinger operator, Hardy space, maximal function, atomic decomposition, Riesz transform.}
\end{flushleft}
\section{Introduction and main results}
In the paper we consider a Schr\"odinger operator on $\RR^{n}$ given by
$$Lf(x)=-\Delta f(x) +V(x)f(x),$$
where $\Delta$ denotes the Laplace operator.
During the whole paper we assume that the potential $V$ satisfies:
\begin{enumerate}
\item[$(A_1)$] there exist $V_j\geq 0$, $V_j \not\equiv 0$ such that
$$V(x) = \Summ V_j(x),$$
\item[$(A_2)$] for every $j\in \{1,...,d\}$ there exists a linear subspace $\VV_j $ of $ \Rn$ of dimension $n_j\geq 3$ such that if $\Pi_{\VV_j}$ denotes the orthogonal projection on $\VV_j$ then
$$V_j(x) = V_j(\Pi_{\VV_j}x),$$
\item[$(A_3)$] there exists $\kappa>0$ such that for $j=1,...,d$ we have
$$V_j \in L^r(\VV_j)$$
for all $r$ satisfying $|r-n_j/2|\leq \kappa$.
\end{enumerate}

Denote by $K_t = \exp(-tL)$ and $P_t = \exp(t\Delta)$ the  semigroups of linear operators associated with $L$ and $\Delta$ respectively. Let $K_t(x,y)$ and $P_t(x-y)$ denote the integral kernels of these semigroups. The Feynman-Kac formula implies that

\begin{equation}\label{FKac}
0\leq K_t(x,y) \leq P_t(x-y)=(4\pi t)^{-n/2} \exp\(-|x-y|^2/4t\).
\end{equation}

Let $M_L$ and $M_\Delta$ be the associated maximal operators, i.e.,
$$M_Lf(x) = \sup_{t>0} |K_t f(x)| \qquad M_\Delta f(x) = \sup_{t>0} |P_t f(x)|.$$
The Hardy spaces $\Har$ and $\HarD$ are the subspaces of $\LL$ defined by:
$$f \in \Har \iff M_Lf \in \LL, \qquad f \in \HarD \iff M_\Delta f \in \LL $$
with the norms:
$$ \| f\|_{\Har}=\| M_Lf\|_{\LL}, \qquad \| f\|_{\HarD}=\| M_\Delta f\|_{\LL}.$$
Clearly the space $\HarD$ is the classical Hardy space $H^1(\Rn)$ (see \cite{Stein}).
The goal of the paper is to prove some characterizations of the space $\Har$.

Denote by $L^{-1}$ and $(-\Delta)^{-1}$ the operators with the kernels $\Gamma (x,y) =\int_0^\8 K_t(x,y) \, dt$ and $\Gamma_0 (x-y)=\int_0^\8 P_t(x-y) \, dt$.
 Clearly,
 \begin{equation}\label{kiu}
0\leq \int_0^t K_s(z,y) \, ds \leq \Gamma (z,y) \leq \Gamma_0(z-y)=  C |z-y|^{2-n}.
\end{equation}
We shall see that operators $I-VL^{-1}$ and $I-V\Delta^{-1}$ are bounded on $\LL$ and give the following characterization of the Hardy space $\Har$.

\begin{teo}\label{tw1}
Assume $f\in L^1(\RR^n)$. Then $f$ belongs to $\Har$ if and only if $(I-VL^{-1})f$ belongs to the classical Hardy space $H_\Delta^1(\RR^n)$.
Moreover,
$$\|f\|_{H^1_L(\RR^n)} \sim \|(I-VL^{-1})f\|_{H^1_\Delta(\RR^n)}.$$
\end{teo}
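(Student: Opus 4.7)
The plan is to exploit the Duhamel perturbation formula to compare $K_t f$ with $P_t g$, where $g=(I-VL^{-1})f$, and to show the resulting remainder has controlled $L^1$ maximal norm. Starting from the Duhamel identity $K_t f = P_t f - \int_0^t P_{t-s}VK_s f\,ds$ and the representation $VL^{-1}f = \int_0^\infty V K_s f\, ds$, a direct rearrangement yields
\begin{equation*}
 K_t f - P_t g \;=\; \int_0^t (P_t - P_{t-s})\,V K_s f\,ds \;+\; \int_t^\infty P_t V K_s f\, ds \;=:\; R_1(t) + R_2(t),
\end{equation*}
and using $\int_t^\infty K_s f\,ds = L^{-1}K_t f$ (semigroup property of $L$) the second piece further simplifies to $R_2(t) = P_t\bigl(V L^{-1} K_t f\bigr)$.

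Pointwise one has $|M_L f - M_\Delta g| \leq \sup_{t>0}|R_1(t,\cdot)| + \sup_{t>0}|R_2(t,\cdot)|$, so both directions of the claimed norm equivalence follow once we prove
\begin{equation*}
 \Bigl\|\sup_{t>0}|R_i(t,\cdot)|\Bigr\|_{L^1(\RR^n)} \;\leq\; C\,\|f\|_{L^1(\RR^n)}, \qquad i=1,2.
\end{equation*}
Indeed $\|f\|_1 \leq \|f\|_{H^1_L(\RR^n)}$ is standard, while using the assumed $L^1$-boundedness of $I - V\Delta^{-1}$ (which is the inverse of $I - VL^{-1}$) one has $\|f\|_1 = \|(I - V\Delta^{-1})g\|_1 \leq C\|g\|_1 \leq C\|g\|_{H^1_\Delta(\RR^n)}$.

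For the remainders, positivity of $K_s$ gives $|L^{-1}K_t f|\leq L^{-1}M_L f$, yielding the pointwise bound $\sup_t|R_2(t,x)|\leq M_\Delta\bigl(VL^{-1}M_L f\bigr)(x)$. For $R_1$, a Gaussian gradient estimate of the form $|P_t(x)-P_{t-s}(x)|\leq C(s/t)\,\wt{P}_{ct}(x)$ for $0\leq s\leq t/2$ (with an analogous bound for $s\geq t/2$), combined with Fubini, yields a pointwise bound of $\sup_t|R_1|$ by a similar Hardy--Littlewood-type maximal expression involving $VL^{-1}$ applied to $M_L f$ and $|f|$.

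The main obstacle is proving that these auxiliary maximal expressions belong to $L^1$ with norm $\leq C\|f\|_1$. Since the Hardy--Littlewood maximal function is only of weak type $(1,1)$, the $L^1$-boundedness of $VL^{-1}$ alone is insufficient, and one must exploit the full structural assumptions $(A_1)$--$(A_3)$: the subspace decomposition $V=\sum_j V_j$ with $V_j(x)=V_j(\Pi_{\VV_j}x)$, the dimension condition $n_j\geq 3$ (which ensures convergence of the transverse integrals $\int_{\VV_j^\perp}|x|^{2-n}\,dx$ that arise when one lifts Riesz-potential bounds from $\VV_j$ to $\RR^n$), and the Kato-type integrability $V_j\in L^r(\VV_j)$ for $r$ near $n_j/2$. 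Together these permit one to reduce the global $L^1$ maximal estimate to a fractional-integration inequality on each subspace $\VV_j$, producing the required control of $\sup_t|R_i|$ in $L^1$ and so completing both directions of the equivalence.
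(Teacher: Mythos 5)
Your reduction to the decomposition $K_t f - P_t(I-VL^{-1})f = -W_t f + Q_t f$ with $W_t = \int_0^t(P_{t-s}-P_t)VK_s\,ds$ and $Q_t = \int_t^\infty P_tVK_s\,ds$ matches the paper exactly, as does the correct target: prove $\|\sup_{t>0}|W_tf|\|_{L^1}+\|\sup_{t>0}|Q_tf|\|_{L^1}\leq C\|f\|_{L^1}$. However, the intermediate step you propose for controlling the remainders contains a genuine gap. The pointwise majorant $\sup_t|R_2(t,x)|\leq M_\Delta\bigl(VL^{-1}M_Lf\bigr)(x)$ is not usable: even granting $VL^{-1}$ bounded on $L^1$, you are applying $M_\Delta$ to a function that is at best in $L^1$, and $M_\Delta$ is only weak type $(1,1)$, so the right-hand side need not be integrable; worse, $M_Lf$ itself is not a priori in $L^1$ when only $f\in L^1$ is assumed, which makes the bound circular for the very equivalence you are trying to establish. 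The vague closing appeal to $(A_1)$--$(A_3)$ and ``fractional integration on $\VV_j$'' does not repair this, because the problem is structural: once the outer $M_\Delta$ has been introduced, the $L^1$ estimate is lost regardless of how good the inner estimates are.

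The paper circumvents this entirely by running a Schur test directly on the kernels: it shows
$$\sup_{y\in\Rn}\int_{\Rn}\sup_{t>0}|W_t(x,y)|\,dx<\infty\quad\text{and}\quad\sup_{y\in\Rn}\int_{\Rn}\sup_{t>0}|Q_t(x,y)|\,dx<\infty,$$
splitting into $t\leq 2$ and $t>2$ (Lemmas 3.1--3.4) and, inside each, splitting the $s$-integral (e.g.\ $\int_0^{t^\beta}$ vs.\ $\int_{t^\beta}^t$ for large $t$, $\int_0^{t/2}$ vs.\ $\int_{t/2}^t$ for small $t$). The crucial point is that the supremum over $t$ is taken on the kernel and absorbed pointwise into fixed integrable envelopes such as $(1+|x-z|)^{-n-2\e}$ before the $x$-integration, so one never passes through a Hardy--Littlewood-type operator acting on a generic $L^1$ function. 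Your bound for $R_1$ via the gradient estimate $|P_t(x)-P_{t-s}(x)|\leq C(s/t)\widetilde P_{ct}(x)$ has the same defect and, in addition, is only valid for $s\leq t/2$; for $s$ near $t$ the difference is not small, and the paper instead exploits Gaussian decay of $K_s(z,y)$ there ($s\in[t^\beta,t]$ or $s\in[t/2,t]$) rather than a difference estimate. You would need to restructure the argument around the kernel-level Schur bounds, with these finer time splittings, to make the proof go through.
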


We define the auxiliary function $\w$ by
$$\w(x) = \lim_{t\to \8} \int_\Rn K_t(x,y)\, dy.$$
The above limit exists because, by \eqref{FKac} and the semigroup property, the function $t \mapsto K_t \mathbf{1}(x)$ is decreasing and takes values in $[0,1]$. Clearly, for every $t>0$,
\begin{equation}\label{omega22}
 \w(x)=K_t \w(x)=\int_{\mathbb R^n} K_t(x,y)\w(y)\, dy.
\end{equation}
We shall prove that there exists $\delta >0$ such that $\delta\leq \w (x)\leq 1$ (see Proposition \ref{lem_w}). We are  now in a position to state our second main result.

\begin{teo}\label{tw2}
 Let $f\in L^1(\RR ^n)$. Then $f$ belongs to $\Har$ if and only if $\w f$ belongs to $\HarD$. Additionally,
$$\|f\|_{\Har} \sim \|\w f\|_{\HarD}.$$
\end{teo}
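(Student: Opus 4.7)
Since $\delta \le \w(x) \le 1$ by Proposition~\ref{lem_w}, multiplication by $\w$ is a bounded invertible map on $L^1(\RR^n)$, so to prove Theorem~\ref{tw2} it suffices to establish the norm equivalence
\[
\|M_L f\|_{L^1} \sim \|M_\Delta(\w f)\|_{L^1}.
\]
The plan is to prove the pointwise bound
\[
\sup_{t>0}\bigl|\w(x)K_tf(x) - P_t(\w f)(x)\bigr| \le \mathcal{E}f(x)
\]
for some sublinear operator $\mathcal{E}$ bounded on $L^1(\RR^n)$. Once this is in hand, dividing by $\w(x)\ge\delta$ gives $|K_tf(x)| \le \delta^{-1}|P_t(\w f)(x)| + \delta^{-1}\mathcal{E}f(x)$, while using $\w\le 1$ yields $|P_t(\w f)(x)| \le |K_tf(x)| + \mathcal{E}f(x)$; taking suprema in $t$ and integrating produces both directions of the equivalence, with the $\mathcal{E}f$ contribution absorbed via $\|f\|_{L^1} \le \|M_Lf\|_{L^1}$ and $\|\w f\|_{L^1}\le\|M_\Delta(\w f)\|_{L^1}$.

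The difference inside the supremum splits naturally into two pieces:
\[
\w(x)K_tf(x) - P_t(\w f)(x) = \int_{\RR^n} K_t(x,y)\bigl(\w(x)-\w(y)\bigr)f(y)\,dy + (K_t-P_t)(\w f)(x).
\]
For the second (Duhamel) term, applying the perturbation identity $P_t - K_t = \int_0^t P_{t-s}\,V\,K_s\,ds$ rewrites it as $-\int_0^t P_{t-s}(V\cdot K_s(\w f))(x)\,ds$; the supremum in $t$ can then be controlled using $L^1$-boundedness of operators built from $VL^{-1}$ (which follows from the announced $L^1$-boundedness of $I-VL^{-1}$) together with the Feynman--Kac domination \eqref{FKac} $K_s(\w f)\le P_s|f|$. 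For the first (cancellation) term one uses the defining identity $K_t\w = \w$, quantitative regularity of the bounded $L$-harmonic function $\w$, and the Gaussian bound \eqref{FKac} on $K_t$ to dominate the integral by a maximal-type operator bounded on $L^1$.

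The main obstacle is expected to be the $L^1$-control of the cancellation term. It requires a quantitative continuity estimate for $\w$, but because $V$ is a sum of potentials each depending only on a distinct lower-dimensional subspace $\VV_j$ (assumptions $(A_1)$--$(A_3)$), the regularity of $\w$ is not covered by classical one-potential results; exploiting the product structure across the subspaces $\VV_j$ together with the $L^r$-integrability of each $V_j$ will likely be the technical heart of the argument. The Duhamel term, by comparison, should reduce to more routine kernel estimates using the $L^1$-boundedness of $VL^{-1}$ and the semigroup domination $K_s\le P_s$.
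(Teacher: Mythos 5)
Your plan hinges on showing that the sublinear operator
\[
\mathcal{E}f(x)=\sup_{t>0}\big|\w(x)K_tf(x)-P_t(\w f)(x)\big|
\]
is bounded on $L^1(\RR^n)$, by bounding separately the ``cancellation'' piece $\int K_t(x,y)(\w(x)-\w(y))f(y)\,dy$ and the ``Duhamel'' piece $(K_t-P_t)(\w f)(x)$. This decomposition does not work: \emph{neither piece is individually bounded on $L^1$ after taking the supremum in $t$}, and the two must be allowed to cancel (for each fixed $t$, not after separate $\sup_t$). For the cancellation piece, testing with $f$ an approximate delta at $y_0$ gives the kernel $\sup_{t>0}K_t(x,y_0)\,|\w(x)-\w(y_0)|$. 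Since $K_t\le P_t$ and in fact $\sup_tK_t(x,y_0)\sim |x-y_0|^{-n}$ (comparable to the free case once $|x-y_0|$ is large), and since $\w$ does \emph{not} tend to $\w(y_0)$ along all directions at infinity (it is constant along $\VV_j^\perp$, and along $\VV_j$ it tends to $1\neq\w(y_0)$ in general), the integral in $x$ over $\{|x-y_0|>1\}$ behaves like $(1-\w(y_0))\int|x-y_0|^{-n}\,dx$, which diverges logarithmically. So $\sup_y\|\sup_t K_t(\cdot,y)|\w(\cdot)-\w(y)|\|_{L^1}=\infty$, and the H\"older estimate of Proposition~\ref{lem_w} only helps near the diagonal, not at infinity. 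The Duhamel piece fails for a different reason: from \eqref{pert1} one has $K_t-P_t=-P_tVL^{-1}+Q_t-W_t$, and while $\sup_t|Q_tg|$, $\sup_t|W_tg|$ are $L^1$-bounded (that is the content of the proofs of Theorem~\ref{tw1}), the remaining term $\sup_t|P_t(VL^{-1}g)|=M_\Delta(VL^{-1}g)$ is controlled only by the Hardy--Littlewood maximal operator, which is weak $(1,1)$ but \emph{not} bounded on $L^1$. So $\sup_t|(K_t-P_t)g|$ is not an $L^1$-bounded operator of $g\in L^1$.

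The paper avoids this obstruction entirely. It does not attempt a pointwise comparison of the two maximal functions; instead it uses Theorem~\ref{tw1} (and the inverse relation \eqref{odwrocenie}) to reduce Theorem~\ref{tw2} to the statement that the multiplier maps
\[
g\mapsto \w\,(I-V\Delta^{-1})g
\quad\text{and}\quad
g\mapsto (I-VL^{-1})(g/\w)
\]
are bounded on $H^1_\Delta(\RR^n)$. This is then proved on classical atoms: if $a$ is a $(1,\infty)$-atom, the crucial global cancellation $\int_{\RR^n}b=0$ for $b=\w(I-V\Delta^{-1})a$ (and its analogue in the other direction) is obtained from the integral identity \eqref{rel_ab}, and $b$ is then decomposed over dyadic annuli into a summable sequence of atoms, using the decay estimate of Lemma~\ref{Lrr} for the far annuli and the H\"older continuity of $\w$ together with Lemma~\ref{Lr} for the near part (Cases 1 and 2). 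The essential advantage of going through atoms is precisely that one can exploit the cancellation $\int a=0$, which is invisible in a pointwise maximal-function argument; without it, as noted above, the error terms are not even in $L^1$.

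If you want to pursue a more direct route, you would need either to work with the undecomposed difference $\w K_tf-P_t(\w f)$ and exhibit the cancellation between the two pieces at matched time scales (a genuinely delicate kernel estimate), or, much more naturally, to pass first through Theorem~\ref{tw1} as the paper does and then use the atomic structure of $H^1_\Delta$.
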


From Theorem \ref{tw2} we get atomic characterizations of the elements of $\Har$. We call a function $a$ an $\w$-atom if it satisfies:
\begin{itemize}
\item there exists a ball $B=B(y,r)$ such that $\supp \, a \subseteq B$,
\item $\|a\|_\8 \leq |B|^{-1}$,
\item $\int_\Rn a(x) \w (x) \, dx = 0$.
\end{itemize}

\begin{coro}
If a function $f$ belongs to $\Har$ then there exist a sequence $a_k$ of  $\w$-atoms and a sequence $\lambda_k\in \mathbb C$ such that $\sum_{k=1}^\8 |\lambda_k| <\8$, $f = \sum_{k=1}^\8 \lambda_k a_k$, and
$$\|f\|_{\Har} \sim \sum_{k=1}^\8 |\lambda_k|.$$
\end{coro}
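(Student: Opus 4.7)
The plan is to reduce the atomic decomposition for $H^1_L$ to the classical atomic decomposition of $H^1_\Delta=H^1(\RR^n)$ via the isomorphism $f \mapsto \w f$ established in Theorem~\ref{tw2}, using the bounds $\de \leq \w \leq 1$ from Proposition~\ref{lem_w}.

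Here is the sequence of steps I would carry out. Let $f\in \Har$. By Theorem~\ref{tw2}, $g:=\w f\in \HarD$ and $\|g\|_{\HarD}\sim \|f\|_{\Har}$. Since $\HarD$ is the classical real Hardy space, $g$ admits a classical atomic decomposition
\[
g = \sum_{k=1}^\8 \mu_k b_k,
\]
converging in $\LL$, where each $b_k$ is a classical $H^1$-atom associated with a ball $B_k$ (that is, $\supp b_k\subseteq B_k$, $\|b_k\|_\8\leq |B_k|^{-1}$, $\int b_k=0$) and $\sum_k|\mu_k|\sim \|g\|_{\HarD}$. Dividing by $\w$, which is bounded and bounded away from $0$, gives a representation of $f$:
\[
f = \w^{-1} g = \sum_{k=1}^\8 \mu_k \,\w^{-1} b_k.
\]
Set $a_k:= \de\,\w^{-1} b_k$ and $\la_k := \de^{-1}\mu_k$, so that $f=\sum_k \la_k a_k$ with convergence in $\LL$ (since $\w^{-1}$ is bounded and the series for $g$ converges in $\LL$).

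Next I would verify that each $a_k$ is an $\w$-atom. Indeed, $\supp a_k\subseteq \supp b_k\subseteq B_k$; the inequalities $\w\geq \de$ and $\|b_k\|_\8\leq |B_k|^{-1}$ give
\[
\|a_k\|_\8 = \de\,\|\w^{-1}b_k\|_\8 \leq \de\cdot \de^{-1}|B_k|^{-1} = |B_k|^{-1};
\]
and the cancellation is immediate:
\[
\int_\Rn a_k(x)\,\w(x)\,dx = \de\int_\Rn b_k(x)\,dx = 0.
\]
Moreover $\sum_k|\la_k|=\de^{-1}\sum_k|\mu_k|\lesssim \|g\|_{\HarD}\sim \|f\|_{\Har}$, giving one half of the norm equivalence.

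For the reverse estimate, if $f=\sum_k \la_k a_k$ is any representation by $\w$-atoms, then for each $k$ the function $\w a_k$ is (a fixed multiple of) a classical $H^1$-atom: it is supported in the same ball, bounded by $\|a_k\|_\8\leq |B_k|^{-1}$ (because $\w\leq 1$), and has vanishing integral by the definition of $\w$-atom. Hence
\[
\|\w f\|_{\HarD}\leq \sum_k|\la_k|\,\|\w a_k\|_{\HarD}\lesssim \sum_k|\la_k|,
\]
and Theorem~\ref{tw2} yields $\|f\|_{\Har}\lesssim \sum_k|\la_k|$. Combining the two bounds gives $\|f\|_{\Har}\sim \inf\sum_k|\la_k|$, as claimed. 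I do not anticipate a real obstacle here: the whole content of the corollary is packaged in Theorem~\ref{tw2} and the uniform estimate $\de\leq \w\leq 1$; the only care needed is the elementary check that multiplication by $\w^{\pm 1}$ turns classical atoms into $\w$-atoms and vice versa, with the cancellation $\int a\,\w\,dx=0$ matching the classical $\int b\,dx=0$ after conjugation by $\w$.
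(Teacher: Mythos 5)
Your proof is correct and is exactly the intended deduction: the paper does not give an explicit proof of this corollary but simply asserts that it follows from Theorem~\ref{tw2}, and your argument (transporting the classical atomic decomposition of $\w f$ back via multiplication by $\w^{-1}$, using $\de\le\w\le 1$ to normalize the atoms and match the cancellation conditions, and running the same correspondence in reverse for the lower bound) is precisely that deduction, carried out carefully.
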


For $i=1,...,n$ denote by $\partial_i$ the derivative in the direction of the $i$-th canonical coordinate of  $\Rn$. For $f\in \LL$ the classical Riesz transforms $\rr_{\Delta, i}$ are given by
$$\rr_{\Delta,i}f = \lim_{\e\to 0} \iie \dxi P_t f \dtt.$$
Similarly we define the Riesz transforms $\rr_{L,i}$ associated with $L$ by setting
$$\rr_{L,i}f = \lim_{\e\to 0} \iie \dxi K_t f \dtt.$$
We shall see that the last limits are  well-defined in the sense of distributions and they characterize $\Har$ in the following sense.

\begin{teo}\label{tw3}
An $L^1(\RR ^n)$-function $f$ belongs to $\Har$ if and only if $\rr_{L,i} f$ belong to $\LL$ for $i=1,...,n$. Additionally,
$$\|f\|_{\Har} \sim \| f\|_{\LL}+\sum_{i=1}^n \|\rr_{L,i} f\|_{\LL}.$$
\end{teo}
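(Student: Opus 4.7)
The plan is to reduce Theorem~\ref{tw3} to the classical Riesz-transform characterization of $\HarD$ (see \cite{Stein}) via the isomorphism provided by Theorem~\ref{tw1}. The algebraic bridge is the operator identity
\begin{equation*}
\rr_{L,i}f - \rr_{\Delta,i}\bigl((I-VL^{-1})f\bigr) = S_i f, \quad S_i := \partial_i\bigl[L^{-1/2}-(-\Delta)^{1/2}L^{-1}\bigr] = \partial_i\bigl[L^{1/2}-(-\Delta)^{1/2}\bigr]L^{-1},
\end{equation*}
obtained by combining $\rr_{L,i}=\partial_i L^{-1/2}$ and $\rr_{\Delta,i}=\partial_i(-\Delta)^{-1/2}$ with the resolvent identity $(-\Delta)L^{-1}=I-VL^{-1}$.

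Granted the $L^1$-boundedness of $S_i$, both directions of Theorem~\ref{tw3} follow quickly. If $f\in \Har$, Theorem~\ref{tw1} yields $(I-VL^{-1})f\in \HarD$, so the classical characterization gives $\rr_{\Delta,i}((I-VL^{-1})f)\in \LL$; adding $S_i f\in\LL$ produces $\rr_{L,i}f\in\LL$ with the right-hand-side bound. Conversely, if $f\in \LL$ and $\rr_{L,i}f\in\LL$ for every $i$, rearranging the identity together with the $L^1$-boundedness of $I-VL^{-1}$ (asserted before Theorem~\ref{tw1}) places $(I-VL^{-1})f\in \LL$ and $\rr_{\Delta,i}((I-VL^{-1})f)\in\LL$; the classical theorem then puts $(I-VL^{-1})f$ in $\HarD$, and Theorem~\ref{tw1} returns $f\in\Har$ with the matching norm equivalence.

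The main obstacle is therefore the $L^1$-boundedness of $S_i$. The plan is to combine the subordination formula $A^{-1/2}=\pi^{-1/2}\int_0^\infty e^{-tA}\,t^{-1/2}\,dt$ (applied to $A=L$ and $A=-\Delta$) with Duhamel's identity $K_t-P_t=-\int_0^t P_{t-s}VK_s\,ds$ so as to realize $S_i$ as an explicit kernel operator whose integrand couples $\partial_i P_u(x-z)$, $V(z)$, and $K_s(z,y)$. Controlling this kernel by means of the Gaussian gradient bound $|\nabla_x P_u(x-y)|\lesssim u^{-1/2}P_{2u}(x-y)$, the Feynman--Kac estimate \eqref{FKac}, and the subspace integrability $(A_3)$, reduces the $L^1$-estimate to Riesz-potential type bounds on operators of the form $(-\Delta)^{-1}(V\,\cdot)$; at this stage the low-dimensional structure $(A_2)$ (each $V_j$ depending only on variables in a subspace of dimension $\ge 3$) is what makes the dimensional analysis close. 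A preliminary step, also dispatched with the same semigroup tools, is to justify the $\e\to 0$ limits defining $\rr_{L,i}f$ in $\mathcal{D}'(\Rn)$ for $f\in\LL$.
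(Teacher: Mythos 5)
Your proposal is correct and follows essentially the same route as the paper: the paper rewrites $\rr_{L,i}f-\rr_{\Delta,i}\bigl((I-VL^{-1})f\bigr)$ through the subordination integrals $\int_\e^{\e^{-1}}\partial_i(\,\cdot\,)\,dt/\sqrt{t}$ and the Duhamel perturbation identities \eqref{pert1} and \eqref{pert3}, producing exactly the kernel operator you call $S_i$, and then proves $L^1$-boundedness (and convergence of the truncations) by the Gaussian gradient bound, Feynman--Kac, and Lemma~\ref{Lr}. The only implementation detail you do not flag is that the paper uses two slightly different rearrangements of Duhamel's formula — $Q_t-W_t$ for $t>2$ and $Q_t'-W_t'$ for $t\leq2$ — yielding four operator families $\ww_{i,\e},\ww'_{i,\e},\qq_{i,\e},\qq'_{i,\e}$ whose kernels are estimated separately, but this is precisely the dimensional-analysis step you describe in general terms.
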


Hardy spaces associated with semigroups of linear operators and in particular Schr\"odinger semigroups  attracted attention of many authors,  see, e.g., \cite{ADM}, \cite{BZ}, \cite{CZ}, \cite{DZ1}, \cite{DZ5}, \cite{Hof}  and references therein.
The present paper generalizes the results of   \cite{DZ} and \cite{DP}, where the spaces    $\Har$ were  studied
under assumptions: $V\geq 0$, $\supp \, V$ is compact, $V\in L^r(\RR^n)$ for some $r>n/2$. Obviously such potentials $V$ satisfy the conditions  $(A_1)-(A_3)$.
To prove Theorems \ref{tw1}, \ref{tw2}, and \ref{tw3} we develop methods  of \cite{DZ} and \cite{DP}.

\section{Auxiliary lemmas}
In the paper we shall use the following notation. For $z\in \Rn$ and a subspace $\VV_j$ of $\Rn$ we write
$$z = z_j + \wt z_j, \quad z_j = \Pi_{\VV_j}(z) , \quad \wt z_j = \Pi_{\VV_j^\bot}(z),\quad \wt n_j = \dim \VV_j^\bot = n-n_j.$$
Notice that if $\VV_j=\Rn$, then , in fact, there is no $\VV_j^\bot$ in fact.

The relation between $P_t$ and $K_t$ is given by the perturbation formula.
\begin{equation}\label{pert}
P_t = K_t + \int_0^t P_{t-s}VK_s\, ds.
\end{equation}
The following two lemmas state crucial estimates that will be used in many proofs of this paper.

\begin{lema}\label{Lr}
There exists $\lambda>0$ such that
\begin{equation}\label{Lrrr}
\sup_{y\in \Rn} \|V(\cdot)|\cdot-y|^{2-n+\mu}\|_{L^r(\RR^n)} \leq C
\ \ \ \text{for }  r\in [1,1+\lambda] \  \text{and  } \mu \in [-\lambda,\lambda].
\end{equation}
\end{lema}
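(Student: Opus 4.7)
By the triangle inequality in $L^r(\RR^n)$, it suffices to prove the analogous estimate for each summand $V_j$ separately, with a constant depending on $j$. Fix $j \in \{1,\dots,d\}$ and decompose $x = x_j + \widetilde x_j$, $y = y_j + \widetilde y_j$ with $x_j, y_j \in \VV_j$ and $\widetilde x_j, \widetilde y_j \in \VV_j^\bot$. Since $V_j(x) = V_j(x_j)$, Fubini gives
\begin{equation*}
\|V_j(\cdot)|\cdot-y|^{2-n+\mu}\|_{L^r(\RR^n)}^r = \int_{\VV_j} V_j(x_j)^r \left(\int_{\VV_j^\bot} |x-y|^{(2-n+\mu)r}\, d\widetilde x_j\right) dx_j.
\end{equation*}
Translate $\widetilde x_j \mapsto \widetilde x_j + \widetilde y_j$ and pass to polar coordinates in $\VV_j^\bot$ (dimension $\widetilde n_j = n - n_j$); after the scaling $\rho = |x_j - y_j| s$ the inner integral equals
\begin{equation*}
C_{\widetilde n_j}\, |x_j - y_j|^{(2-n+\mu)r + \widetilde n_j}\int_0^\infty (1+s^2)^{(2-n+\mu)r/2}\, s^{\widetilde n_j - 1}\, ds,
\end{equation*}
with the last integral finite provided $(2-n+\mu)r + \widetilde n_j < 0$; at $r = 1$, $\mu = 0$ this equals $2 - n_j$, which is strictly negative because $n_j \geq 3$, so by continuity the condition holds uniformly for $r, \mu$ in a small neighborhood of $(1,0)$. (When $\widetilde n_j = 0$ there is nothing to integrate and one reads $\beta = (2-n+\mu)r$.)

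Writing $\beta = (2-n+\mu)r + \widetilde n_j$ (which is close to $2 - n_j$), the matter reduces to bounding
\begin{equation*}
\sup_{y_j \in \VV_j} \int_{\VV_j} V_j(x_j)^r\, |x_j - y_j|^\beta\, dx_j \leq C
\end{equation*}
uniformly in $r \in [1, 1+\lambda]$, $\mu \in [-\lambda, \lambda]$. Split the integration region along $|x_j - y_j| = 1$ and apply H\"older's inequality to each piece. Choose exponents $q_+ \in (n_j/2,\, n_j/2 + \kappa]$ and $q_- \in [n_j/2 - \kappa,\, n_j/2)$; by hypothesis $(A_3)$, $V_j \in L^{q_\pm}(\VV_j)$. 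On the near region take $V_j^r \in L^{q_+/r}$ and $|x_j - y_j|^\beta \in L^{(q_+/r)'}$; local integrability of $|u|^{\beta(q_+/r)'}$ at the origin requires $\beta (q_+/r)' + n_j > 0$, which at $r=1, \mu = 0$ boils down to $q_+ > n_j/2$. On the far region take $V_j^r \in L^{q_-/r}$, and integrability at infinity requires $\beta(q_-/r)' + n_j < 0$, i.e. $q_- < n_j/2$. Both conditions hold by our choice of $q_\pm$ at $(r,\mu) = (1,0)$, hence by continuity they remain valid in a neighborhood; the resulting H\"older bounds are $\|V_j\|_{L^{q_\pm}(\VV_j)}^r$, uniform in $y_j$.

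Finally, pick $\lambda \in (0, \kappa/2)$ small enough that for every $r \in [1, 1+\lambda]$ and $\mu \in [-\lambda, \lambda]$: (a) the tail integral $\int_0^\infty (1+s^2)^{(2-n+\mu)r/2} s^{\widetilde n_j - 1}\, ds$ converges for each $j$; (b) the H\"older exponents $q_\pm/r$ exceed $1$ (which is fine since $n_j/2 \geq 3/2$); and (c) the exponents satisfy the near/far integrability conditions for every $j$. The main potential obstacle is ensuring a single $\lambda > 0$ works simultaneously for all $j$ and both the near and far estimates, but since each condition is strict at $(r,\mu) = (1,0)$ and there are finitely many $j$, a uniform $\lambda$ exists.
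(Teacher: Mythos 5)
Your proof is correct and follows essentially the same route as the paper: reduce to a single $V_j$, integrate out the $\VV_j^\bot$ variable to obtain a factor comparable to $|x_j-y_j|^{(2-n+\mu)r+\widetilde n_j}$ (you do this by exact polar coordinates, the paper by an elementary inequality plus a crude estimate of the same inner integral), and then split $\VV_j$ near/far and apply H\"older with two different exponents furnished by $(A_3)$. The only stylistic difference is your use of Fubini plus polar coordinates for the inner integral, which is a clean equivalent of the paper's estimate (2.4).
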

\begin{proof}
 It suffices to prove (\ref{Lrrr}) for $V=V_1$. For fixed  $y\in \RR^n$ we have
\begin{equation}\label{prew}
\| V_1(\cdot )|\cdot -y|^{2-n+\mu} \|_{L^r(\RR^n)}^r \leq C \int_{\VVV} \int_{\VVVV} \frac{V_1(z_1)^r}{|z_1-y_1|^{-r(2-n+\mu)}+|\wt z_1 - \wt y_1|^{-r(2-n+\mu)}}\, d\wt z_1 \, dz_1.
\end{equation}
Observe that if $\lambda >0$ is sufficiently small, $ r\in [1,1+\lambda]$, and $\mu \in [-\lambda,\lambda]$ then
\begin{equation}\label{edre}
\begin{split}
\int_{\VVVV}&\(|z_1-y_1|^{-r(2-n+\mu)}+|\wt z_1 - \wt y_1|^{-r(2-n+\mu)}\)^{-1} \, d\wt{z_1}\cr
&\leq C \int_{|z_1-y_1|>|\wt z_1 - \wt y_1|} |z_1-y_1|^{r(2-n+\mu)}\, d\wt z_1 +C\int_{|z_1-y_1|\leq|\wt z_1 - \wt y_1|} |\wt z_1 - \wt y_1|^{r(2-n+\mu)} \, d\wt z_1\cr
&\leq C |z_1-y_1|^{r(2-n+\mu)+\wt n_1}.
\end{split}
\end{equation}
Thus, by (\ref{edre}),
\begin{align}
\| V_1(\cdot)|\cdot -y|^{2-n+\mu} \|_{L^r(\RR^n)}^r \leq &C \int_{|z_1-y_1|\leq 1} V_1(z_1)^r |z_1-y_1|^{r(2-n+\mu)+\wt n_1} \, dz_1 \cr
&+C \int_{|z_1-y_1|>1} V_1(z_1)^r |z_1-y_1|^{r(2-n+\mu)+\wt n_1} \, dz_1.
\end{align}
Note that by $(A_3)$ there exist $t,s>1$ such that $V_1^r \in L^t(\VVV) \cap L^s(\VVV)$ and
$$\chi_{\{|z_1|\leq 1\}}(z_1)|z_1|^{r(2-n+\mu)+\wt n_1} \in L^{t'}(\VVV), \qquad  \chi_{\{|z_1|> 1\}}(z_1)|z_1|^{r(2-n+\mu)+ \wt n_1} \in L^{s'}(\VVV)$$
for $r\in[1,1+\lambda]$ and $\mu \in [-\lambda,\lambda]$ provided $\lambda>0$ is small enough. Thus \eqref{Lrrr} follows from  the H\"older inequality.
\end{proof}

\begin{coro}
The operators $I-V\Delta^{-1}$ and $I-VL^{-1}$ are bounded on $\LL$ and
\begin{equation}\label{odwrocenie}
(I-VL^{-1})(I-V\Delta^{-1}) f  =(I-V\Delta^{-1})(I-VL^{-1}) f=f \quad \text{for} \quad f\in \LL.
\end{equation}
\end{coro}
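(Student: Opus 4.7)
The plan is to prove the two assertions in turn. For $L^1$-boundedness, I would observe that a positive integral operator has $L^1$-operator norm at most $\sup_y \int K(x,y)\,dx$. The kernel of $V(-\Delta)^{-1}$ is $V(x)\Gamma_0(x-y) = C V(x)|x-y|^{2-n}$, so Lemma~\ref{Lr} with $r=1$ and $\mu = 0$ gives $\sup_y \|V(\cdot)|\cdot-y|^{2-n}\|_{L^1(\Rn)} \leq C$. By the pointwise domination $0 \leq \Gamma(x,y) \leq \Gamma_0(x-y)$ from \eqref{kiu}, the same estimate controls the kernel of $VL^{-1}$. Interpreting $\Delta^{-1} = -(-\Delta)^{-1}$ (so that $I - V\Delta^{-1} = I + V(-\Delta)^{-1}$ has a positive kernel), both $I - V\Delta^{-1}$ and $I - VL^{-1}$ are thus the identity plus a bounded operator on $\LL$.

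For the inverse identity \eqref{odwrocenie}, the second step is to integrate the perturbation formula \eqref{pert} over $t \in (0,\infty)$ and apply Fubini, whose use is legitimate since all integrand kernels are nonnegative by \eqref{FKac}. After changing variables this yields $(-\Delta)^{-1} = L^{-1} + (-\Delta)^{-1} V L^{-1}$. Taking the formal adjoint of \eqref{pert} and changing variables (using self-adjointness of $P_t$, $K_t$, and of multiplication by $V$) gives the dual formula $P_t = K_t + \int_0^t K_{t-s} V P_s\,ds$, which integrates analogously to $(-\Delta)^{-1} = L^{-1} + L^{-1} V (-\Delta)^{-1}$. Left-multiplication by $V$ and rearrangement then produce
\begin{equation*}
V(-\Delta)^{-1}(I - VL^{-1}) = VL^{-1} = (I - VL^{-1}) V(-\Delta)^{-1}.
\end{equation*}
Adding $I - VL^{-1}$ to both sides of each equality delivers
\begin{equation*}
(I + V(-\Delta)^{-1})(I - VL^{-1}) = I = (I - VL^{-1})(I + V(-\Delta)^{-1}),
\end{equation*}
which is precisely \eqref{odwrocenie}.

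The point requiring care is the justification of operator-level compositions on $\LL$: the products $V(-\Delta)^{-1} \cdot VL^{-1}$ and $VL^{-1} \cdot V(-\Delta)^{-1}$ must be recognised as bounded operators on $\LL$, but this is immediate from the first step, since each factor is bounded on $\LL$. Beyond this, the argument is an algebraic rearrangement of the two integrated perturbation formulas, and positivity of the semigroup kernels removes every technical issue with the interchanges of integrals.
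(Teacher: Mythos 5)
Your proof is correct, and it uses precisely the ingredients the paper sets up for this corollary (which the paper itself states without proof). The $L^1$-boundedness is Schur's test applied to the positive kernel $V(x)|x-y|^{2-n}$, invoking Lemma~\ref{Lr} at $r=1$, $\mu=0$ together with the pointwise domination $0\le\Gamma(x,y)\le\Gamma_0(x-y)$ from~\eqref{kiu}. The inversion identity comes from integrating the perturbation formula~\eqref{pert} and its adjoint over $t\in(0,\infty)$ (Tonelli, all kernels nonnegative), left-multiplying by $V$, and rearranging. This is the natural fill-in.

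One remark worth making explicit: your reading of $\Delta^{-1}$ as the literal inverse of $\Delta$, so that $I-V\Delta^{-1}=I+V(-\Delta)^{-1}$, is in fact forced. If one instead read $\Delta^{-1}$ as the positive-kernel operator $(-\Delta)^{-1}$ with kernel $\Gamma_0$, then the same integrated perturbation identities give $(I-VL^{-1})(I-V(-\Delta)^{-1})=I-2VL^{-1}$, which is not the identity. Your interpretation is also the one consistent with the paper's earlier notation, which deliberately writes $(-\Delta)^{-1}$, not $\Delta^{-1}$, when introducing the operator with kernel $\Gamma_0\ge 0$. It would be worth stating this sign convention explicitly when writing the proof up, since the paper is silent on it.
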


\begin{lema}\label{Lrr}
There exists $\sigma, \e>0$ such that for $s\in [1,1+\e]$ and $R\geq 1$ we have
\begin{equation}\label{eq1} \sup_{y\in \Rn} \int_{|z-y|>R} V(z)^s|z-y|^{s(2-n)}\, dz \leq C R^{-\sigma}.
\end{equation}
\end{lema}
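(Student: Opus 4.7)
The plan is to extract the desired power saving $R^{-\sigma}$ by splitting the weight $|z-y|^{s(2-n)}$ into a piece that fits into Lemma~\ref{Lr} and a piece whose smallness on the region $|z-y|>R$ is responsible for the decay in $R$. Concretely, for a small positive parameter $\mu$ (to be chosen in terms of the constant $\lambda$ furnished by Lemma~\ref{Lr}) I would write
\begin{equation*}
|z-y|^{s(2-n)} \;=\; |z-y|^{s(2-n+\mu)} \cdot |z-y|^{-s\mu}.
\end{equation*}
Since $\mu>0$, $s\geq 1$ and $R\geq 1$, on the domain of integration we have $|z-y|^{-s\mu} \leq R^{-s\mu} \leq R^{-\mu}$, which I would pull outside of the integral.

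Next, I would choose $\e>0$ and the auxiliary $\mu>0$ both smaller than the $\lambda$ given by Lemma~\ref{Lr}. Then for any $s\in[1,1+\e]$ one has $s\in[1,1+\lambda]$ and $\mu\in[-\lambda,\lambda]$, so Lemma~\ref{Lr} (applied with $r=s$) yields the uniform bound
\begin{equation*}
\sup_{y\in\Rn}\int_{\Rn} V(z)^{s}\,|z-y|^{s(2-n+\mu)}\,dz \;=\; \sup_{y\in\Rn}\|V(\cdot)|\cdot-y|^{2-n+\mu}\|_{L^s(\Rn)}^s \;\leq\; C.
\end{equation*}
Combining this with the pointwise bound above gives
\begin{equation*}
\int_{|z-y|>R} V(z)^{s}|z-y|^{s(2-n)}\,dz \;\leq\; R^{-\mu}\int_{\Rn} V(z)^{s}|z-y|^{s(2-n+\mu)}\,dz \;\leq\; C R^{-\mu},
\end{equation*}
so $\sigma=\mu$ works.

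I do not expect any significant obstacle: the only thing to watch is that $\e$ and $\mu$ be chosen small enough that the two Lemma~\ref{Lr} hypotheses (namely $s\in[1,1+\lambda]$ and $\mu\in[-\lambda,\lambda]$) are satisfied simultaneously, which is immediate by taking, for instance, $\e=\mu=\lambda/2$. The constant $C$ is then the one from Lemma~\ref{Lr}, independent of $y$ and $R$.
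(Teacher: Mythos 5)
Your proof is correct, and it takes a genuinely different and noticeably slicker route than the paper. The paper's argument redoes from scratch the same kind of coordinate splitting used in the proof of Lemma~\ref{Lr}: it reduces to $V=V_1$, splits the region $|z-y|>R$ according to whether $|z_1-y_1|$ or $|\wt z_1-\wt y_1|$ dominates, and applies H\"older's inequality on $\VVV$ in each piece, extracting the decay $R^{-\sigma}$ with $\sigma=n_1/q-2$ from the tail of a $|z_1-y_1|^{-\text{power}}$ integral. Your observation --- that the statement of Lemma~\ref{Lr} already contains extra room in the exponent, in the form of the parameter $\mu$, and that one can spend a bit of that room on the factor $|z-y|^{-s\mu}\le R^{-\mu}$ over the region $|z-y|>R$ --- turns Lemma~\ref{Lrr} into an immediate corollary of Lemma~\ref{Lr}, with no further coordinate analysis. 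This is shorter, avoids duplicating the structural argument, and makes transparent \emph{why} the decay in $R$ must hold (namely, because the integrability in Lemma~\ref{Lr} is uniform over a small range of exponents, not just at the critical one). The only thing you give up is an explicit, concrete value of $\sigma$ in terms of $n_1$ and $q$; your $\sigma=\mu$ depends on the $\lambda$ from Lemma~\ref{Lr}, which is fine for the purposes of this paper since neither result is ever used with a quantitative rate.
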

\begin{proof}
It is enough to prove (\ref{eq1}) for $V=V_1$. Fix $q>1$ and $\e >0$ such that   $n_1/q(1+\e)-2>0$ and $V_1\in L^{q(1+\e)}(\VVV)\cap L^q(\VVV)$  (see (A3)). Set $\sigma = n_1/q -2$. For $s\in [1,1+\e]$ we have
\begin{align}
\int_{|z-y|>R} V_1(z)^s|z-y|^{s(2-n)}\, dz \leq &\int_{|z_1-y_1|\geq |\wt z_1 - \wt y_1|} \chi_{\{|z-y|>R\}}(z) V_1(z)^s|z_1-y_1|^{s(2-n)}\, dz\cr
 &+ \int_{|z_1-y_1|< |\wt z_1 - \wt y_1|}\chi_{|z-y|>R}(z) V_1(z)^s|\wt z_1-\wt y_1|^{s(2-n)}\, dz\cr
= &T(R)+S(R).
\end{align}
 If $|z_1-y_1|\geq |\wt z_1 - \wt y_1|$ and $|z-y|>R\geq 1$, then $|z_1-y_1|>R/2\geq 1\slash 2$. Thus,
\begin{align}
T(R)&\leq C \int_{|z_1-y_1|>R/2} |z_1-y_1|^{n- n_1}V_1(z_1)^s |z_1-y_1|^{s(2-n)}\, dz_1 \cr
&\leq C \|V_1\|^s_{L^{qs}(\RR^{n_1})} \(\int_{|z_1-y_1|>R/2} |z_1-y_1|^{(s(2-n)+n-n_1)q'}\, dz_1\)^{1/q'}= CR^{-\sigma}.
\end{align}
Similarly, if $|z_1-y_1|< |\wt z_1 - \wt y_1|$ and $|z-y|>R\geq 1$, then $|\wt z_1-\wt y_1|>R/2\geq 1\slash 2$ and
\begin{align}
S(R)&\leq C \int_{|\wt z_1-\wt y_1|>R/2} \|V_1\|^s_{L^{sq}(\RR^{n_1})} \(\int_{|z_1-y_1|< |\wt z_1 - \wt y_1|}dz_1\)^{1/q'} |\wt z_1-\wt y_1|^{s(2-n)}\, d\wt z_1 \cr &\leq C \int_{|\wt z_1-\wt y_1|>R/2} |\wt z_1-\wt y_1|^{s(2-n)+n_1/q'}\, d\wt z_1= CR^{-\sigma}.
\end{align}
\end{proof}

We shall need the following properties of the function $\w$ similar to those that hold in the case of compactly supported potentials  (c.f., \cite{DZ}, Lemma 2.4).
\begin{propo}\label{lem_w}
There exist $\gamma, \delta >0$ such that for $x,y \in \Rn$ we have
\begin{enumerate}[(a)]
\item \label{prop_a} $|\w(x)-\w(y)| \leq C_\gamma|x-y|^\gamma$,
\item\label{prop_b} $\delta\leq \w(x) \leq 1$.
\end{enumerate}
\end{propo}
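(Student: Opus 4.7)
The plan is to derive a fixed-point identity $\w = 1 - \Gamma_0*(V\w)$ and then reduce both conclusions to Lemma~\ref{Lr}. Applying the perturbation formula \eqref{pert} to $\mathbf{1}$ and using $P_s\mathbf{1}\equiv\mathbf{1}$ gives
$1 - K_t\mathbf{1}(x) = \int_0^t P_{t-s}(VK_s\mathbf{1})(x)\,ds$, and after the change of variable $u=t-s$ this becomes $\int_0^t P_u(VK_{t-u}\mathbf{1})(x)\,du$. Since $VK_{t-u}\mathbf{1}\leq V$ pointwise and $\int_0^\infty P_uV(x)\,du = \Gamma_0*V(x)$ is bounded by Lemma~\ref{Lr} (with $r=1$, $\mu=0$), dominated convergence as $t\to\infty$ yields
\begin{equation*}
\w(x) = 1 - \Gamma_0*(V\w)(x).
\end{equation*}

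For (b), the upper bound $\w(x)\leq 1$ is immediate from \eqref{FKac}. For the lower bound, the Feynman--Kac formula gives $\w(x) = E^x\bigl[\exp\bigl(-\int_0^\infty V(B_s)\,ds\bigr)\bigr]$, and Jensen's inequality together with Tonelli produces
\begin{equation*}
\w(x) \geq \exp\!\left(-\int_0^\infty P_sV(x)\,ds\right) = \exp(-\Gamma_0*V(x)) \geq e^{-C} =: \delta > 0,
\end{equation*}
where $C$ is the uniform bound on $\Gamma_0*V$ from Lemma~\ref{Lr}.

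For (a), the identity above together with $0\leq \w\leq 1$ gives
$|\w(x)-\w(y)| \leq \int|\Gamma_0(x-z)-\Gamma_0(y-z)|\,V(z)\,dz$. Set $h=|x-y|$ and assume $h\leq 1$ (otherwise use the trivial bound $|\w(x)-\w(y)|\leq 2$). Split the integral at $|x-z|=2h$. On the near region, bound $|\Gamma_0(x-z)-\Gamma_0(y-z)|\leq \Gamma_0(x-z)+\Gamma_0(y-z)$ and estimate each term using $|x-z|^{2-n}\leq (2h)^\mu|x-z|^{2-n-\mu}$ for a small $\mu\in(0,\lambda)$; Lemma~\ref{Lr} with $r=1$ and exponent $-\mu$ then controls this by $Ch^\mu$. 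On the far region, the mean-value theorem yields $|\Gamma_0(x-z)-\Gamma_0(y-z)|\leq Ch|x-z|^{1-n}$, and the pointwise bound $h|x-z|^{1-n}\leq h(2h)^{\mu-1}|x-z|^{2-n-\mu}$ (valid because $|x-z|\geq 2h$ and $\mu<1$) again reduces the integral to Lemma~\ref{Lr}, giving $\leq Ch^\mu$. Taking $\gamma=\mu$ proves (a).

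The principal technical point is the derivation of $\w = 1 - \Gamma_0*(V\w)$ -- in particular, passing $t\to\infty$ under the integral -- for which the uniform integrability from Lemma~\ref{Lr} is exactly the tool needed. Once this identity is in hand, (b) is a short consequence of Feynman--Kac and Jensen, and (a) is a routine near/far splitting, both reducing mechanically to Lemma~\ref{Lr}.
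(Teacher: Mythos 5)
Your proof is correct, and it takes a genuinely different route from the paper's. The key difference is that you work at the level of the Green function $\Gamma_0$ rather than the heat kernel. For (b), the paper proceeds by induction on $d$, the number of summands in $V$: it reduces the base case to $\VV_1=\Rn$, establishes $\lim_{|x|\to\infty}\w(x)=1$ via the estimate $1-\w(x)=\int V(y)\Gamma(x,y)\,dy$, and then combines strict positivity of $K_t$ with continuity of $\w$ to get a positive lower bound, iterating through $V_2,\dots,V_d$ with a subpotential comparison. Your Feynman--Kac $+$ Jensen argument, $\w(x)\geq\exp(-\Gamma_0*V(x))\geq e^{-C}$, collapses all of that to two lines and even produces the explicit constant $\delta=e^{-C}$ with $C=\sup_x\Gamma_0*V(x)$ from Lemma~\ref{Lr}; the tradeoff is that it leans on the full probabilistic representation of $e^{-tL}$, which the paper only invokes implicitly for the domination~\eqref{FKac}. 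For (a), the paper estimates $\int|K_1(x+h,y)-K_1(x,y)|\,dy$ using the Duhamel formula and the tensor factorization $P_s(x)=P_s(x_1)P_s(\tilde x_1)$ across $\VV_j$ and $\VV_j^{\perp}$, while you run a standard near/far splitting of $|\Gamma_0(x-z)-\Gamma_0(y-z)|$ against the identity $\w=1-\Gamma_0*(V\w)$; both reduce to Lemma~\ref{Lr}, but your version avoids the case analysis over the subspaces. Your derivation of the identity itself is sound: the change of variable $u=t-s$, the domination $VK_{t-u}\mathbf{1}\leq V$, and the integrability $\int_0^\infty P_uV(x)\,du\leq C$ justify the passage $t\to\infty$ under the integral, and the resulting identity is consistent with (though formally distinct from) the paper's $1-\w=\int V(y)\Gamma(x,y)\,dy$, both being instances of $L\w=0$.
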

\begin{proof}
 The property $\eqref{prop_a}$ can be proved by a slight modification of the proof of   $(2.6)$ in \cite{DZ}. Indeed, thanks to (\ref{omega22}) and $0\leq \w(x)\leq 1$, it suffices to show that there is $C,\gamma>0$ such that for $|h|<1$ we have
 \begin{equation}\label{L1} \int_{\mathbb R^n} |K_1(x+h,y)-K_1(x,y)|\, dy\leq C|h|^\gamma.
 \end{equation}
To this purpose, by using  (\ref{pert}), it is enough to establish that
 $$ \sum_{j=1}^d \int_{\mathbb R^n} \Big| \int_{0}^1 \int_{\mathbb R^n} (P_s(x+h-z)-P_s(x-z))V_j(z)K_{1-s}(z,y)\, dz\, ds\Big|\, dy\leq C|h|^\gamma.$$
 Consider one summand that contains $V_1$. Utilizing the fact that $P_s(x)=P_s(x_1)P_s(\tilde x_1)$, where $P_s(x_1)$ and $P_s(\tilde x_1)$ are the heat kernels on \
 $\VVV$ and $\VVV^\perp$ respectively,  we have
 \begin{equation}\begin{split}
  I & =\int_{\mathbb R^n}   \Big| \int_{0}^1 \int_{\mathbb R^n} (P_s(x+h-z)-P_s(x-z))V_1(z)K_{1-s}(z,y)\, dz\, ds\Big|\, dy \\
  &\leq    \int_{0}^1 \int_{\mathbb R^n} | P_s(x+h-z)-P_s(x-z)|V_1(z)\, dz\, ds\\
  &\leq \int_{0}^1 \int_{\mathbb R^n} P_s(x_1+h_1-z_1)\big|P_s(\tilde x_1+\tilde h_1-\tilde z_1)-P_s(\tilde x_1-\tilde z_1)\big|V_1(z_1)\, dz\, ds\\
  & \ \ + \int_{0}^1 \int_{\mathbb R^n} P_s(\tilde x_1-\tilde z_1)\big|P_s(x_1+ h_1- z_1) -P_s( x_1- z_1)\big|V_1(z_1)\, dz\, ds\\
   \end{split}\end{equation}
By taking $q>n_1\slash 2$ such that $V_1\in L^q(\VVV)$ and using the H\"older inequality we obtain
  \begin{equation}\begin{split}
I &\leq \int_{0}^1  \|P_s(x_1)\|_{L^{q'}(dx_1)} \| V_1(z_1)\|_{L^q(dz_1)}
 \int_{\VVV^\perp} |P_s(\tilde x_1+\tilde h_1-\tilde z_1)
  -P_s(\tilde x_1-\tilde z_1)|\, d\tilde z_1\, ds\\
  & \ \ + \int_{0}^1 \( \int_{ \VVV} \big|P_s(x_1+ h_1- z_1)
  -P_s(x_1- z_1)\big|^{q'}\, dz_1\)^{1\slash q'}\| V_1(z_1)\|_{L^q(dz_1)}\, ds\\
  & \leq C(|\tilde h_1|^\gamma +|h_1|^\gamma ),
  \end{split}
 \end{equation}
which finishes the proof of (a).

 Next we note that
 \begin{equation}\label{ooo}
 K_t(x,y)>0 \ \ \text{ for } t>0 \  \text{and } x,y\in \mathbb R^n.
 \end{equation}
 The proof of (\ref{ooo}) is a straightforward  adaptation of the proof of \cite[Lemma 2.12]{DZ}. We omit the details.

Our next task is to establish that there exists $\delta>0$ such that
\begin{equation}\label{oddz}
 \w(x) \geq \delta.
\end{equation}
The proof of (\ref{oddz}) goes by induction on $d$. Assume first that we have only one potential $V_1$, that is, $d=1$. Then, $K_t(x,y)=K^{\{1\}}_t(x_1,y_1)P_t(\tilde x_1-\tilde y_1)$, where $K^{\{1\}}_t(x_1,y_1)$ is the kernel of the semigroup generated by $\Delta -V_1(x_1)$   on $\VVV$ and $P_t(\tilde x_1)$ is the classical heat semigroup on $\VVVV$. Hence $\w(x)=\w_0(x_1)$, where $\w_0(x_1)=\lim_{t\to\infty }\int_{\VVV} K^{\{1\}}_t(x_1,y_1)\, dy_1$. Therefore, there is no loss of generality in proving (\ref{oddz}) if we assume that $\VVV=\Rn$.
If we integrate \eqref{pert} over $\Rn$ and take the limit as $t\to \8$, then we get
\begin{equation}\label{eqeq}
1-\w(x) = \int_{\Rn} V(y)\Gamma (x,y) \, dy,
\ \ \text{where   } \Gamma(x,y)\leq C |x-y|^{2-n}.
\end{equation}
By $(A_3)$ and the H\"older inequality we can find $t,s>1$ such that $V\in L^t(\RR^{n})\cap L^s(\RR^{n})$, $\chi_{\{|x|\leq 1\}}(x)|x|^{2-n}\in L^{t'}(\RR^{n})$, and $\chi_{\{|x|>1\}}(x)|x|^{2-n}\in L^{s'}(\RR^{n})$. Thus \eqref{eqeq} leads to
\begin{equation}\label{klm}
\lim_{|x|\to \8} \int_\Rn V(y) |x-y|^{2-n} \, dy = 0 \quad \text{and} \quad \lim_{|x|\to \8}\w(x) =1.
\end{equation}
The equation (\ref{omega22}) combined with (\ref{ooo}) and (\ref{klm}) imply that $w(x)>0$ for every $x\in\Rn$. Since $\w$ is continuous (see (a)) and $\lim_{|x|\to\infty}\w(x)=1$, we get (\ref{oddz}).

Using induction, we assume that \eqref{oddz} is true for $V$ being a sum of $d-1$ potentials. Take $V=V_1+...+V_d$. As in the case of $d=1$, we can assume that $\text{lin}\{\VVV,..., \VV_d\} =\Rn$. Consider the  semigroup $\{S_t\}_{t>0}$ generated by $-\Delta +V_2+...+V_d$. Let $\w_1(x)=\lim_{t\to \8} \int_\Rn S_t(x,y)\, dy$. By the inductive assumption $\w_1(x)\geq \delta_1$. Similarly to \eqref{eqeq}, the perturbation formula
$$ S_t=K_t+\int_0^tS_{t-s}V_1K_s\, ds$$
implies
\begin{equation}\label{poiu}
\delta_1 \leq \w_1(y) \leq \w(y) + C \int_\Rn V_1(z)|z-y|^{2-n} \, dz \leq \w(y) + C\int_{\VVV} V_1(z_1)|z_1-y_1|^{2-n_1} \, dz_1,
\end{equation}
where the last inequality is proved in \eqref{edre}. If $y_1 \to \8$ then the integral on the right hand side  of \eqref{poiu} goes to zero. Hence, $\w(y)>\delta_1\slash 2$ provided $|y_1|>R_1$.
We repeat the argument for each $V_2$, ..., $V_d$ instead of $V_1$ and deduce that there exists $R,\delta>0$ such that $\w(x) >\delta$ for $|x|>R$. Consequently, by using (\ref{omega22}), (\ref{ooo}) and continuity of $\w$ we obtain (\ref{oddz}).
\end{proof}

\section{Proof of Theorem \ref{tw1}}

By \eqref{pert} we get
\begin{equation}\label{pert1}
K_t - P_t(I-VL^{-1}) = Q_t - W_t,
\end{equation}
where
\begin{equation*}\label{pert2}
W_t = \int_0^t (P_{t-s}-P_t)\,V\, K_s\, ds, \qquad Q_t = \int_t^\8 P_t \, V \, K_s \, ds.
\end{equation*}
\newcommand{\jjj}{{\langle j \rangle}}
\newcommand{\jjjj}{{\langle 1 \rangle}}
Let
\begin{align*}
&W_t(x,y) &&=\Summ W_t^{\jjj}(x,y)&&= \Summ \int_0^t \int_\Rn (P_{t-s}(x-z) - P_t (x-z)) V_j(z) K_s(z,y) \, dz\, ds, \\
&Q_t(x,y)&&=\Summ Q_t^{\jjj}(x,y)&&=\Summ \int_{\Rn}P_t(x,z)\int_t^\infty V_j(z) K_s(z,y)\, ds\, dz
\end{align*}
be the integral kernels of $W_t$ and $Q_t$ respectively.
In order to prove Theorem \ref{tw1} it is sufficient to establish that the maximal operators: $f\mapsto \sup_{t>0}|W_t f|$ and $f \mapsto \sup_{t>0}|Q_t f|$ are bounded on $L^1(\RR^n)$. The proofs of these facts are presented in the following four lemmas.

\begin{lema}\label{lem1}
The operator $f \mapsto \sup_{t>2} |W_t f|$ is bounded on $L^1(\RR^n)$.
\end{lema}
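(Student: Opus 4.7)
The plan is to reduce the $L^1$-boundedness of the maximal operator to the pointwise kernel estimate
\[
\sup_{y\in\Rn}\int_\Rn \sup_{t>2}|W_t(x,y)|\,dx \leq C.
\]
Once this is established, $\sup_{t>2}|W_tf(x)|\leq \int_\Rn \sup_{t>2}|W_t(x,y)|\,|f(y)|\,dy$ combined with Fubini gives $\|\sup_{t>2}|W_tf|\|_{L^1}\leq C\|f\|_{L^1}$. Since $W_t=\Summ W_t^{\jjj}$ has only finitely many summands, it suffices to handle a single $W_t^{\jjj}$; fix $j$ and drop the superscript. I split the $s$-integral into part A ($0<s\leq t/2$) and part B ($t/2<s<t$). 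For part A I use the mean value estimate $|P_{t-s}(y)-P_t(y)|\leq C(s/t)P_{ct}(y)$, valid for $s\leq t/2$ and following from $|\partial_r P_r|\leq Cr^{-1}P_{2r}$ together with $r\sim t$ on $[t-s,t]$. For part B I use the crude bound $|P_{t-s}-P_t|\leq P_{t-s}+P_t$ and treat the two resulting pieces separately.

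For part A, passing $\sup_t$ inside the $s$-integral and changing the order of integration in $x$ and $z$ reduces matters to the auxiliary estimate
\[
F(s) := \int_\Rn \sup_{t>\max(2,2s)}(s/t)\, P_{ct}(x-z)\,dx \leq C\min(s,1).
\]
This follows by direct optimization: $t\mapsto (s/t)P_{ct}(x-z)$ has a unique critical point at $t_*\sim|x-z|^2$, and I split $\Rn$ into the exterior $\{|x-z|^2>C\max(1,s)\}$, where the supremum equals $Cs|x-z|^{-n-2}$ (giving $x$-integral $\sim s/\max(1,s)$), and the interior, where the supremum is controlled by the endpoint value $Cs\max(1,s)^{-n/2-1}$ and one integrates over a ball of volume $\sim\max(1,s)^{n/2}$. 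Combining with the Hölder-type estimate
\[
\int_\Rn V_j(z) K_s(z,y)\,dz \leq C\|V_j\|_{L^r(\VV_j)}\, s^{-n_j/(2r)}
\]
(obtained from $K_s\leq P_s$ and the fact that $V_j$ depends only on $\Pi_{\VV_j}z$), the choice $r>n_j/2$ for $s\leq 1$ and $r<n_j/2$ for $s\geq 1$, both permitted by $(A_3)$, produces exponents $\al_1<1<\al_2$ with $\int V_j K_s\leq C\min(s^{-\al_1},s^{-\al_2})$, so the $s$-integral $\int_0^1 s^{1-\al_1}\,ds+\int_1^\infty s^{-\al_2}\,ds$ converges.

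For part B I substitute $u=t-s$ in the $P_{t-s}$ contribution to obtain $\int_0^{t/2}\int_\Rn P_u(x-z)V_j(z)K_{t-u}(z,y)\,dz\,du$. The constraints $t>2$ and $u\leq t/2$ force $t-u\geq \tau_0:=\max(1,u)$, and maximizing the Gaussian yields
\[
\sup_t K_{t-u}(z,y) \leq \sup_{\tau\geq\tau_0} P_\tau(z-y) \leq C\,P_{\tau_0}(z-y) + C|z-y|^{-n}\mathbf{1}_{\{|z-y|>c\sqrt{\tau_0}\}}.
\]
Integrating first in $x$ (where $\int P_u = 1$), the $P_{\tau_0}$ term contributes $\leq C\tau_0^{-\al_2}$ via the Hölder estimate above (with parameter $\tau_0$), while the tail, via $|z-y|^{-n}\leq \tau_0^{-1}|z-y|^{2-n}$ and Lemma~\ref{Lrr}, contributes $\leq C\tau_0^{-1-\sigma/2}$; $u$-integration then converges since $\al_2>1$ and $\sigma>0$. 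The $P_t$ contribution is handled analogously via $\sup_{t\in[s,2s]}P_t\leq CP_{2s}$ and $\int V_j K_s\leq Cs^{-\al_2}$ for $s\geq 1$. The main obstacle is that the natural algebraic identity $W_t=P_t-K_t-P_tVR_t$ (where $R_t=\int_0^tK_s\,ds$) is useless for the maximal bound, since $\sup_t P_t(x-y)\sim|x-y|^{-n}$ fails to be integrable in $x$; the uniform-in-$t$ estimate can only come from simultaneously exploiting the cancellation in $P_{t-s}-P_t$ and the decay of $\int V_j K_s$ as $s\to 0^+$ and $s\to\infty$.
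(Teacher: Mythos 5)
Your proof is correct, but it organizes the estimates differently from the paper. You split the $s$-integral at $s=t/2$, whereas the paper splits at $s=t^\beta$ for $\beta<1$ close to $1$; more importantly, you interchange the $\sup_{t}$ and the $x$-integral immediately and then optimize over $t$ explicitly (obtaining the clean bound $\int_\Rn\sup_{t>\max(2,2s)}(s/t)P_{ct}(x-z)\,dx\le C\min(s,1)$), whereas the paper keeps the $z$-integral intact and instead bounds $\sup_{t>2}$ of quantities like $t^{-1+\beta}\phi_t(x-z)$ by an integrable profile $(1+|x-z|)^{-n-\delta}$, reducing each piece to Lemma~\ref{Lr}. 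Consequently you never invoke Lemma~\ref{Lr}: your two workhorses are the semigroup decay $\int_\Rn V_j(z)K_s(z,y)\,dz\le C\|V_j\|_{L^r(\VV_j)}s^{-n_j/(2r)}$ (which exploits $(A_3)$ directly through $L^r$-norms of $V_j$ on both sides of $n_j/2$) and Lemma~\ref{Lrr} for the tail term, while the paper leans entirely on Lemma~\ref{Lr}. Your route is self-contained modulo those two ingredients and makes the role of the cancellation in $P_{t-s}-P_t$ (via the factor $\min(s,1)$) and of the two-sided $L^r$-assumption quite transparent; the paper's route is more uniform across the three pieces $F_1,F_2',F_2''$ and keeps every piece in the single mold of ``sup of a dilated Schwartz function $\le$ integrable profile, then Lemma~\ref{Lr},'' which it can then recycle verbatim in Lemmas~\ref{lem2}--\ref{lem4}. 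Both are sound; a minor point worth noting in a write-up is that your $c$ in $\mathbf{1}_{\{|z-y|>c\sqrt{\tau_0}\}}$ is $\sqrt{2n}\ge\sqrt 6>1$ and $\tau_0\ge1$, so the hypothesis $R\ge1$ of Lemma~\ref{Lrr} is indeed met.
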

\begin{proof}
It suffices to prove that
$$\sup_{y\in \Rn}\int_\Rn \sup_{t>2} |W_t(x,y)| \, dx < \8.$$

Without loss of generality we can consider only $W_t^{\jjj}(x,y)$. For $0<\beta<1$, which will be fixed later on, we write
\begin{align*}
W_t^\jjjj(x,y) &= \int_0^t \int_\Rn (P_{t-s}(x-z) - P_t (x-z)) V_1(z) K_s(z,y) \, dz\, ds \cr
&= \int_0^{t^{\beta}}...+\int_{t^{\beta}}^t... = F_1(x,y;t) + F_2(x,y;t).
\end{align*}

To estimate $F_1$ observe that for $t>2$ and $s\leq t^{\beta}<t$ there exists $\phi \in \ss(\RR^n)$ such that
\begin{equation}\label{poi}
|P_{t-s}(x-z) - P_t (x-z)| \leq C \frac{s}{t} \phi_t(x-z).
\end{equation}
Here and subsequently $f_t(x) = t^{-n\slash 2} f(x/\sqrt{t})$ and $\ss$ denotes the Schwartz class of functions.
From \eqref{poi} and \eqref{kiu}, we get
$$|F_1(x,y;t)|\leq Ct^{-1+\beta} \int_\Rn \phi_t(x-z)V_1(z)|z-y|^{2-n} dz.$$
Since
$\sup_{t>2}t^{-1+\beta} \phi_t(x-z) \leq C (1+|x-z|)^{-n-2+2\beta}$,
we have that
$$\sup_{y\in \Rn} \int_\Rn \sup_{t>2} |F_1(x,y;t)|dx \leq C\int_\Rn V_1(z) |z-y|^{2-n} \, dz \leq C,$$
where the last inequality comes from Lemma \ref{Lr}.

To deal with $F_2$ we write
\begin{align*}
F_2(x,y;t)&= \int_{t^\beta}^t \int_\Rn P_{t-s}(x-z)V_1(z)K_s(z,y)\, dz\, ds - \int_{t^\beta}^t \int_\Rn P_{t}(x-z)V_1(z)K_s(z,y)\, dz\, ds\cr
 &= F'_2(x,y;t)-F''_2(x,y;t)
\end{align*}
Observe that for $s\in [t^{\beta},t]$ we have
\begin{equation}\label{eee}
K_s(z,y)\leq C t^{-\beta n/2} \exp\(-|z-y|^2/4t\).
\end{equation}
Also
\begin{equation}\label{rre}
\int_0^t P_{t-s}(x-z)\, ds=\int_0^t P_s(x-z)\, ds \leq C|x-z|^{2-n} \exp\(-|x-z|^2/ct\).
\end{equation}
As a consequence of \eqref{eee}--\eqref{rre} we obtain
\begin{align*}
F'_2(x,y;t)\leq C \int_\Rn t^{-\beta n/2} |x-z|^{2-n}\exp\(-|x-z|^2/ct\)V_1(z_1)\exp\(-|z-y|^2/4t\)\, dz.
\end{align*}
Then, for $\e >0$,
\begin{align*}
\sup_{t>2}t^{-\beta n/2} &\exp\(-|x-z|^2/ct\) \exp\(-|z-y|^2/4t\)\cr
&\leq C\sup_{t>2}t^{-1-\e} \exp\(-|x-z|^2/ct\)\cdot  \sup_{t>2}t^{-\beta n/2+1+\e} \exp\(-|z-y|^2/4t\)\cr
&\leq C(1+|x-z|)^{-2-2\e} |z-y|^{2+2\e-\beta n}.
\end{align*}
Consequently,
\begin{align*}
\sup_{y\in \Rn} \int_\Rn \sup_{t>2} F'_2(x,y;t) \, dx &\leq C \sup_{y\in \Rn} \int_\Rn \int_\Rn \frac{|x-z|^{2-n}}{(1+|x-z|)^{2+2\e}} |z-y|^{2+2\e-\beta n}V_1(z)\, dx\, dz\cr
&\leq C\sup_{y\in \Rn} \int_\Rn |z-y|^{2+2\e-\beta n}V_1(z_1)\, dz .
\end{align*}
If we choose $\beta<1$ close to 1 and $\e$ small, then we can apply Lemma \ref{Lr} and get
$$\sup_{y\in \Rn} \int_\Rn \sup_{t>2} F'_2(x,y;t) \, dx\leq C.$$

We now turn to estimate $ F''_2(x,y;t) $.
Observe that for $\e >0$ we have
$$\int_{t^\beta}^t K_s(z,y)\, ds \leq C\int_{t^\beta}^\8 t^{-\beta\e} s^{-n/2+\e} \exp\(-|z-y|^2/(4s)\)\, ds \leq C t^{-\beta \e} |z-y|^{2-n+2\e}.$$
Then from Lemma \ref{Lr} we conclude that
\begin{align*}
\sup_{y\in \Rn} \int_\Rn \sup_{t>2}F''_2(x,y;t) \, dx &\leq C\sup_{y \in \Rn} \int_\Rn \int_\Rn \sup_{t>2} t^{-\beta \e} P_t(x-z) V_1(z) |z-y|^{2-n+2\e}\, dx \, dz\cr
&\leq C\sup_{y\in \Rn}\int_\Rn \int_\Rn (1+|x-z|)^{-n-2\beta \e} V_1(z) |z-y|^{2-n+2\e}\, dx \, dz\cr
 &\leq C\sup_{y\in \Rn} \int_\Rn V_1(z) |z-y|^{2-n+2\e}\, dz\leq C,
\end{align*}
provided $\e>0$ is small enough.
\end{proof}

\begin{lema}\label{lem2}
The operator $f \mapsto \sup_{t\leq 2} |W_t f|$ is bounded on $L^1(\RR^n)$.
\end{lema}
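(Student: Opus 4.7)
Following Lemma \ref{lem1}, it suffices to show
$$\sup_{y \in \Rn}\int_\Rn \sup_{0 < t \leq 2}|W_t^{\jjjj}(x,y)|\, dx < \8,$$
and only a single summand $W_t^{\jjj}$ must be handled. Split the $s$-integral at $s = \tau(t)$ into $F_1(x,y;t) = \int_0^{\tau(t)}\cdots$ and $F_2(x,y;t) = \int_{\tau(t)}^t \cdots$, as in Lemma \ref{lem1}. The essential novelty is that $t$ is allowed to go to $0^+$, so the cutoff $\tau = t^\beta$ with $\beta < 1$ from Lemma \ref{lem1} must be replaced by $\tau(t) = t^\beta$ with $\beta = 1+\e > 1$ for a small $\e > 0$, so that $\tau(t) \leq t/2$ for $t$ sufficiently small; the range of $t$ bounded both above and below reduces essentially to Lemma \ref{lem1}-type arguments.

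For $F_1$, use $|P_{t-s}(x-z) - P_t(x-z)| \leq C(s/t)\phi_t(x-z)$ for a Gaussian-like $\phi \in \ss(\Rn)$, combined with $s/t \leq t^\e$ and $\int_0^\8 K_s(z,y)\, ds \leq C|z-y|^{2-n}$ from \eqref{kiu}, to obtain
$$|F_1(x,y;t)| \leq C t^\e \int_\Rn \phi_t(x-z) V_1(z)|z-y|^{2-n}\, dz.$$
Setting $\Psi(u) = \sup_{0 < t \leq 2} t^\e \phi_t(u)$, a direct computation shows $\Psi(u) \leq C|u|^{2\e - n}$ for $|u| \leq 1$ (locally integrable thanks to $\e > 0$) with rapid decay at infinity, so $\Psi \in L^1(\Rn)$. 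Bounding the supremum of a convolution by the convolution with the supremum kernel gives
$$\int_\Rn \sup_{0 < t \leq 2}|F_1(x,y;t)|\, dx \leq \|\Psi\|_{L^1}\, \sup_y \|V_1(\cdot)|\cdot - y|^{2-n}\|_{L^1(\Rn)} \leq C$$
by Lemma \ref{Lr}.

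For $F_2$, decompose $F_2 = F_2' - F_2''$ as in Lemma \ref{lem1}. For $s \in (t^\beta, t)$ use $K_s(z,y)\leq C t^{-\beta n/2}\exp(-|z-y|^2/4t)$ and $\int_0^t P_{t-s}(x-z)\, ds \leq C|x-z|^{2-n}\exp(-|x-z|^2/ct)$ to obtain
$$F_2'(x,y;t) \leq C \int_\Rn V_1(z)|x-z|^{2-n}\, t^{-\beta n/2}\exp\!\bigl(-(|x-z|^2+|z-y|^2)/ct\bigr)\, dz.$$
Since $t^{-\beta n/2} \to \8$ as $t \to 0^+$, split $t^{-\beta n/2} = t^{-a_1}\cdot t^{-a_2}$ with $0 < a_1 < 1$ and $a_2 = \beta n/2 - a_1$, and apply
$$\sup_{0 < t \leq 2} t^{-a}\exp(-r/ct) \leq C\min(r^{-a}, e^{-r/4})$$
to each factor. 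The $x$-integral is finite because $a_1 < 1$ makes $|x-z|^{2-n-2a_1}$ locally integrable near $x = z$ while retaining rapid decay at infinity; the $z$-integral of $V_1(z)|z-y|^{-2a_2}$ is bounded uniformly in $y$ by Lemma \ref{Lr} provided $|2a_2 - (n-2)| \leq \lambda$. Both constraints are simultaneously met by choosing $\e > 0$ small enough relative to $\lambda$. The term $F_2''$ is handled analogously using $\int_{t^\beta}^t K_s(z,y)\, ds \leq Ct^{-\beta\e'}|z-y|^{2-n+2\e'}$ for a small $\e' > 0$ and the same scheme of distributing negative powers of $t$.

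The main obstacle is the singular factor $t^{-\beta n/2}$ in $F_2$ as $t\to 0^+$, absent from the proof of Lemma \ref{lem1} where $t>2$ keeps it bounded. The remedy is the two-piece splitting $t^{-\beta n/2} = t^{-a_1}t^{-a_2}$: it distributes the singularity into a mild $|x-z|^{-2a_1}$ (integrable against $|x-z|^{2-n}$ near the diagonal because $a_1 < 1$) and only a small weight shift $\mu = n - 2 - 2a_2$ on the $V_1$-side that stays within the admissible range of Lemma \ref{Lr}.
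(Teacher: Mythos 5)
Your $F_1$ argument is a genuinely different and cleaner route than the paper's: by moving the cutoff to $s=t^{1+\e}$ you squeeze the factor $s/t\le t^\e$ out of the mean-value estimate, and then the majorant $\Psi(u)=\sup_{0<t\le2}t^\e\phi_t(u)$ is an honest $L^1(\Rn)$ kernel, so a Fubini argument closes the case. This sidesteps the local-maximal-operator machinery the paper uses for the corresponding piece $F_3$ (decomposition of $\xi_y=V_1(\cdot)|\cdot-y|^{2-n}$ into annular near-atoms $\xi_{y,k}$ and the theory of local maximal functions). Your $F_2'$ bookkeeping, with the constraint $\e n<\lambda$, is also correct.

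The $F_2''$ step, however, is broken as written. The estimate you invoke,
$$\int_{t^\beta}^t K_s(z,y)\,ds\le Ct^{-\beta\e'}|z-y|^{2-n+2\e'},$$
throws away the factor of $t$ coming from the length $t-t^\beta\le t$ of the $s$-interval. Paired with $P_t(x-z)=Ct^{-n/2}\exp(-|x-z|^2/4t)$, the total $t$-power is $-n/2-\beta\e'$, and
$$\sup_{0<t\le2}t^{-n/2-\beta\e'}\exp\bigl(-|x-z|^2/4t\bigr)\sim|x-z|^{-n-2\beta\e'}$$
near the diagonal, which is \emph{not} locally integrable in $x$; no redistribution of $t$-powers to the $|z-y|$-side can cure this because that exponent is already fixed at $2-n+2\e'$. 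The identical estimate appears in the paper's Lemma \ref{lem1}, but there $t>2$ protects against the diagonal singularity, which is exactly what is lost when $t\le2$. The fix is to keep the interval-length gain, as the paper does for its $F_6$: use
$$\int_{t^\beta}^t K_s(z,y)\,ds\le(t-t^\beta)\sup_{s\in[t^\beta,t]}K_s(z,y)\le C\,t\cdot t^{-\beta n/2}\exp(-|z-y|^2/ct),$$
so that $tP_t(x-z)=Ct^{1-n/2}\exp(-|x-z|^2/4t)$, and then distribute $t^{1-n/2-\beta n/2}=t^{-a_1}t^{-a_2}$ with $a_1<n/2$ (so $|x-z|^{-2a_1}$ is locally integrable against the Gaussian tail) and $|n-2-2a_2|\le\lambda$ (Lemma \ref{Lr}); both hold when $\e n<\lambda$. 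Finally, the remark that $t$ bounded above and below ``reduces essentially to Lemma~\ref{lem1}-type arguments'' needs a sentence of its own: for $t\in[c,2]$ with $c>0$ fixed the kernels are bounded with Gaussian decay and the estimate is elementary, but this is a separate (easy) case, not a corollary of Lemma~\ref{lem1}, whose proof genuinely uses $t>2$.
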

\begin{proof}
It is enough to prove that
$$\sup_{y\in \Rn} \int_\Rn \sup_{t\leq 2} |W_t^\jjjj(x,y)| \, dx < \8.$$

We have
$$\int_0^t \int_\Rn (P_{t-s}(x-z) - P_t (x-z)) V_1(z) K_s(z,y) \, dz\, ds = \int_0^{t/2}...+\int_{t/2}^t... = F_3(x,y;t) + F_4(x,y;t).$$

To deal with $F_3$ observe that for $t\leq 2$, $s\leq t/2$ we have
$$|P_{t-s}(x-z) - P_t (x-z)| \leq C\phi_t(x-z),$$
where $\phi \in \ss (\RR^n)$, $\phi \geq 0$.
Therefore
$$\sup_{t\leq 2}|F_3(x,y;t)|\leq C \sup_{t\leq 2} \int_\Rn \phi_t(x-z)V_1(z)|z-y|^{2-n} dz.$$
Denote by $M_\phi^0$ the classical local maximal operator associated with $\phi$, that is,
$$M_\phi^0 f(x)=\sup_{t\leq 2} |\phi_t*f(x)|.$$
Then
$$\sup_{t\leq 2}|F_3(x,y;t)| \leq C M_\phi^0 (\xi_y)(x),$$
where $\xi_y(z)=V_1(z)|z-y|^{2-n}$. We claim that
\begin{equation}\label{rtyui}
\sup_{y\in \Rn}\int_\Rn \sup_{t\leq 2}|F_3(x,y)|dx \leq C\sup_{y\in \Rn} \int_{\Rn} M_\phi^0 (\xi_y)(x)\, dx \leq C.
\end{equation}
To obtain \eqref{rtyui} we write
$$\xi_y(z) = \sum_{k=1}^\8 \xi_{y,k}(z),$$
where$$\xi_{y,1}(z) = V_1(z)|z-y|^{2-n}\chi_{B(y,2)}(z), \quad \xi_{y,k}(z) = V_1(z)|z-y|^{2-n} \chi_{B(y,2^k)\backslash B(y,2^{k-1})}(z), \ \ k>1.$$
From Lemma \ref{Lr} it follows that there exists $s>1$ such that
\begin{equation}\label{llllll}
\supp \,\xi_{y,1} \subseteq B(y,2) \text{ and } \|\xi_{y,1}\|_{L^s(\Rn)} \leq C\leq C|B(y,2)|^{-1+1/s}.
\end{equation}

Consider $\xi_{y,k}$ for $k>1$. Set $q<n_1/2$ such that $V_1\in L^q(\VVV)$. Then
\begin{align}\label{lllll}
&& &\supp\, \xi_{y,k} \subseteq B(y, 2^k).\cr
&& &\|\xi_{y,k}\|_{L^{q}(\RR^n)} \leq C2^{k(2-n)} \|V_1\|_{L^{q}(\VVV)}2^{k(n-n_1)/q} \leq C |B(y, 2^k)|^{-1+1/q} 2^{-\rho k},
\end{align}
where $\rho=n_1/q -2$.
Now, our claim \eqref{rtyui} follows from \eqref{lllll}, \eqref{llllll}, and the classical theory of local maximal operators.

It remains to analyze $F_4=F_5-F_6$, where
\begin{align*}
F_5(x,y;t) &= \int_{t/2}^t \int_\Rn P_{t-s}(x-z) V_1(z) K_s(z,y) \, dz\, ds,\\
F_6(x,y;t) &= \int_{t/2}^t \int_\Rn P_t (x-z) V_1(z) K_s(z,y) \, dz\, ds.
\end{align*}
Clearly,
$$\sup_{s\in[t/2,t]}K_s(z,y) \leq  C t^{-n/2} \exp\(-|z-y|^2/ct\).$$
Therefore, for $0< t \leq 2$ and $0<\gamma<1$ close to 1 we get
\begin{align*}
F_5(x,y;t)&\leq C \int_0^{t/2} \int_\Rn  t^{-\gamma}P_s(x-z)V_1(z) t^{-n/2+\gamma} \exp\(-|z-y|^2/ct\)\, dz\, ds\cr
&\leq C \int_\Rn |x-z|^{2-n} t^{-\gamma} \exp\(-|x-z|^2/ct\) V_1(z) |z-y|^{-n+2\gamma}\, dz\cr
&\leq C \int_\Rn |x-z|^{2-n-2\gamma} \exp\(-|x-z|^2/c'\) V_1(z) |z-y|^{-n+2\gamma}\, dz.
\end{align*}
Thus, by using Lemma \ref{Lr}, we get
\begin{align*}
\sup_{y\in \Rn}\int_\Rn \sup_{0<t\leq 2} F_5(x,y;t)dx \leq C.
\end{align*}

 To deal with $F_6$ we observe that for $0< t \leq 2$ and $0<\gamma<1$ close to 1 we have
\begin{align*}
F_6(x,y;t)&\leq C \int_\Rn t P_t(x-z) V_1(z_1) t^{-n/2} \exp\(-|z-y|^2/ct\)\, dz\cr
&\leq \int_\Rn |x-z|^{2-n-2\gamma} \exp\(-|x-z|^2/c'\) V_1(z) |z-y|^{-n+2\gamma}\, dz
\end{align*}
and, consequently,
$$\sup_{y\in \Rn} \int_\Rn \sup_{t<2} F_6(x,y;t)\,dx \leq C.$$
\end{proof}

\begin{lema}\label{lem3}
The operator $f \mapsto \sup_{t>2} |Q_t f|$ is bounded on $L^1(\RR^n)$.
\end{lema}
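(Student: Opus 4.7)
\emph{Proof plan.} The goal is to establish the $L^1_x$-uniform bound
$$\sup_{y\in\Rn}\int_\Rn \sup_{t>2}|Q_t(x,y)|\,dx<\8,$$
and by the triangle inequality we may work with a single summand, which without loss of generality we take to be $Q_t^\jjjj$. The strategy is to decouple the two ingredients of the integrand: first extract a factor $t^{-\e}$ out of the tail $\int_t^\infty K_s(z,y)\,ds$ in exchange for a polynomial weight in $|z-y|$, then pair the remaining $t^{-\e}$ with $P_t(x-z)$ to produce polynomial decay in $|x-z|$, and finally close the estimate by Lemma \ref{Lr}.

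For the first step I would use the Gaussian majorization $K_s(z,y)\leq P_s(z-y)$ from \eqref{FKac} together with the elementary pointwise inequality
$$s^{-n/2+1+\e}\exp(-|z-y|^2/(4s))\leq C|z-y|^{2-n+2\e},$$
valid for any $\e\in(0,n/2-1)$ (this range is nonempty because $n\geq n_1\geq 3$ by $(A_2)$). Writing $s^{-n/2}=s^{-1-\e}\cdot s^{-n/2+1+\e}$ and integrating from $t$ to $\infty$ gives
$$\int_t^\infty K_s(z,y)\,ds\leq C t^{-\e}|z-y|^{2-n+2\e},$$
so that
$$|Q_t^\jjjj(x,y)|\leq C t^{-\e}\int_\Rn P_t(x-z)V_1(z)|z-y|^{2-n+2\e}\,dz.$$

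For the second step, a parallel optimization yields
$$\sup_{t>2}t^{-\e}P_t(x-z)\leq C(1+|x-z|)^{-n-2\e}$$
(trivial for $|x-z|^2\leq 8$ by using $t\geq 2$ and the prefactor $t^{-n/2-\e}$; for $|x-z|^2>8$ one optimizes the Gaussian exponent in $t$). Pushing the supremum inside the integral (justified by positivity) gives
$$\sup_{t>2}|Q_t^\jjjj(x,y)|\leq C\int_\Rn(1+|x-z|)^{-n-2\e}V_1(z)|z-y|^{2-n+2\e}\,dz.$$
Fubini (justified by $\int_\Rn(1+|x-z|)^{-n-2\e}\,dx<\8$) followed by Lemma \ref{Lr} with $r=1$ and $\mu=2\e\leq\lambda$ then yields the desired uniform bound in $y$.

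The only delicate point is choosing a single $\e>0$ that simultaneously satisfies $\e<n/2-1$, $2\e\leq\lambda$, and $\e>0$; all three conditions hold for any sufficiently small positive $\e$, so this amounts to bookkeeping rather than a genuine obstacle. Conceptually the argument is simpler than that of Lemma \ref{lem1}, since $Q_t$ lacks the difference $P_{t-s}-P_t$ that forced the splitting of the $s$-integral at $t^\beta$ there.
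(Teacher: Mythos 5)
Your proposal is correct and follows essentially the same route as the paper's proof: extract a factor $t^{-\e}$ from $\int_t^\infty K_s(z,y)\,ds$ to get $C t^{-\e}|z-y|^{2-n+2\e}$, pair it with $P_t(x-z)$ to produce $(1+|x-z|)^{-n-2\e}$ under the supremum in $t$, integrate in $x$, and close with Lemma~\ref{Lr}. The paper obtains the tail estimate by writing $s^{-n/2}=s^{-\e}\cdot s^{-n/2+\e}$ and evaluating a gamma-type integral, whereas you split as $s^{-1-\e}\cdot s^{-n/2+1+\e}$ and maximize pointwise in $s$; this is an immaterial cosmetic difference.
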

\begin{proof}
Notice that for $\e>0$ and $t>2$ we have
\begin{align}
\int_t^\8  K_s(z,y)\, ds \leq C \int_t^\8 s^{-\e} s^{-n/2+\e}
\exp\(-\frac{|y-z|^2}{4s}\)\, ds \leq C t^{-\e} |y-z|^{2-n+2\e}.
\end{align}
It causes no loss of generality to consider only $Q_t^\jjjj(x,y)$. If $t>2$, then
$$0\leq Q_t^\jjjj(x,y)\leq C\int_\Rn P_t(x-z)V_1(z) t^{-\e} |y-z|^{2-n+2\e}\, dz.$$
Since
$\sup_{t>2} t^{-\e} P_t(x-z) \leq C (1+|x-z|)^{-n-2\e}$,
we find that
\begin{align}
\sup_{y\in\Rn}\int_\Rn \sup_{t>2} Q_t^\jjjj(x,y)\, dx &\leq C \sup_{y\in\Rn} \int_\Rn \int_\Rn (1+|x-z|)^{-n-2\e} V_1(z)  |y-z|^{2-n+2\e} \, dz\, dx\cr
&\leq C \sup_{y\in\Rn} \int_\Rn V_1(z) |y-z|^{2-n+2\e} \, dz \leq C.
\end{align}
The last inequality follows from Lemma \ref{Lr}.
\end{proof}

\begin{lema}\label{lem4}
The operator $f \mapsto \sup_{t\leq 2} |Q_t f|$ is bounded on $L^1(\RR^n)$.
\end{lema}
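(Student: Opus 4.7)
The plan parallels the proof of Lemma \ref{lem2}. As in the previous lemmas it suffices to establish
$$\sup_{y\in \RR^n}\int_{\RR^n} \sup_{t\leq 2} |Q_t^\jjjj(x,y)|\, dx < \infty,$$
the other summands $Q_t^\jjj$ being treated identically.

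The crucial observation is that the inner $s$-integral is uniformly controlled by the Green function: for every $t>0$,
$$\int_t^\infty K_s(z,y)\, ds \leq \Gamma(z,y) \leq C|z-y|^{2-n}$$
by \eqref{kiu}. Substituting this bound into the definition of $Q_t^\jjjj$ and choosing $\phi(x)=(4\pi)^{-n/2}e^{-|x|^2/4}$ so that $P_t = \phi_t$, we obtain
$$\sup_{t\leq 2} |Q_t^\jjjj(x,y)| \leq C\sup_{t\leq 2} (\phi_t * \xi_y)(x) = C M^0_\phi(\xi_y)(x),$$
where $\xi_y(z)=V_1(z)|z-y|^{2-n}$ and $M^0_\phi$ is exactly the local heat-type maximal operator used in Lemma \ref{lem2}.

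From this point the argument is literally the one already carried out for $F_3$ in Lemma \ref{lem2}: decompose $\xi_y = \sum_{k\geq 1} \xi_{y,k}$ with $\xi_{y,1}$ supported in $B(y,2)$ and $\xi_{y,k}$ supported in $B(y,2^k)\setminus B(y,2^{k-1})$ for $k\geq 2$. Lemma \ref{Lr} delivers the bound $\|\xi_{y,1}\|_{L^s}\leq C|B(y,2)|^{-1+1/s}$ for some $s>1$, while a choice of $q<n_1/2$ with $V_1\in L^q(\VV_1)$ gives
$$\|\xi_{y,k}\|_{L^q}\leq C|B(y,2^k)|^{-1+1/q}\,2^{-\rho k},\qquad \rho = n_1/q-2>0.$$
The standard theory of local maximal operators then produces $\sup_{y\in \RR^n}\int_{\RR^n} M^0_\phi(\xi_y)(x)\, dx \leq C$, which closes the proof.

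The only step specific to this lemma is the elementary bound $\int_t^\infty K_s\, ds\leq \Gamma$, which trivially removes the $t$-dependence from the inner kernel and reduces the situation to one that has already been analysed; there is no essentially new obstacle.
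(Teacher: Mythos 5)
Your proof is correct and follows exactly the route the paper takes: bound $\int_t^\infty K_s\,ds$ by $\Gamma(z,y)\leq C|z-y|^{2-n}$, reduce the supremum over $t\leq 2$ to the local maximal operator $M_\phi^0$ applied to $\xi_y(z)=V_1(z)|z-y|^{2-n}$, and invoke the dyadic decomposition of $\xi_y$ already established in the proof of \eqref{rtyui} in Lemma \ref{lem2}. The paper states this more tersely but the argument is the same.
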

\begin{proof}
The estimate $\int_t^\8 K_s (z,y) \, ds \leq C |z-y|^{2-n}$ implies
$$\sup_{t\leq 2} Q_t(x,y) \leq C \sup_{t\leq 2} \int_\Rn P_t(x-z)V(z)|z-y|^{2-n}\, dz.$$
We claim that for fixed $y\in \Rn$ the foregoing function (of variable $x$) belongs to $L^1(\RR^n)$ and
$$\sup_{y\in \Rn} \int_\Rn \sup_{t\leq 2} Q_t(x,y)\, dx <\8.$$
The claim follows by arguments identical to that we used to prove \eqref{rtyui}.
\end{proof}

Now, Theorem \ref{tw1} follows directly from Lemmas \ref{lem1}, \ref{lem2}, \ref{lem3}, \ref{lem4}.

\section{Proof of Theorem \ref{tw2}}
\begin{proof}
Thanks to  (\ref{eqeq}) and Proposition \ref{lem_w}, for $g\in L^1(\mathbb R^n)$, we obtain
\begin{equation}\label{rel_ab}
\begin{split}
\int_{\Rn} (I-VL^{-1})(g\slash \omega)(x)\, dx & = \int_{\Rn}\frac{g(x)}{\w(x)}\, dx-\int_{\Rn}\int_{\Rn} V(x)\Gamma (x,y) \frac{g(y)}{\w(y)}\, dy\, dx\\
 & =\int_{\Rn}\frac{g(x)}{\w(x)}\, dx-\Big(\int_{\Rn}\frac{g(y)}{\w(y)}\, dy -w(y) \frac{g(y)}{\w(y)}\, dy\Big) \\
 &=\int_{\Rn} g(y)\, dy.
\end{split}\end{equation}

First, we are going to prove that
\begin{equation}\label{ineq1}
\|\w f\|_{H^1_\Delta(\RR^n)}\leq \|f\|_{H^1_L(\RR^n)}.
\end{equation}
Theorem \ref{tw1} combined with \eqref{odwrocenie} implies that \eqref{ineq1} is equivalent to
\begin{equation}\label{dfgh}
\|\w(I-V\Delta^{-1})f\|_{H^1_\Delta(\RR^n)} \leq C\|f\|_{H^1_\Delta (\RR^n)}.
\end{equation}

Assume that $a$ is a classical  $(1,\8)-$atom associated with $B=B(y_0,r)$, i.e.,
\begin{equation}\label{atomatom}
\supp\, a \subseteq B, \quad \|a\|_\8\leq |B|^{-1}, \quad \int_B a(x)\, dx = 0.
\end{equation}
By the atomic characterization of $H^1_\Delta(\RR^n)$ the inequality \eqref{dfgh} will be obtained, if we have established that $b = \w(I-V\Delta^{-1})a\in H^1_\Delta (\RR^n)$ and
\begin{equation}\label{ogratom}
\|b\|_{ H^1_\Delta (\RR^n)}\leq C
\end{equation}
with a constant $C>0$ independent of $a$.

By (\ref{odwrocenie}), $a=(I-VL^{-1})(b\slash \w)$. Hence, using (\ref{rel_ab}) we get
\begin{equation}\label{zero}
\int_\Rn b(x)\, dx =0.
\end{equation}

Proof of \eqref{ogratom} is divided into two cases.\\

{\it Case 1: $r\geq 1$.} Set
$$
b(x)=(b(x) - c_1)\chi_{2B}(x) + \sum_{k=2}^\8\(b(x) \chi_{2^kB\backslash 2^{k-1}B}(x)+c_{k-1}\chi_{2^{k-1}B}(x) - c_k\chi_{2^kB}(x)\)
=\SumNN b_k(x),
$$
where
$$c_k= -|2^kB|^{-1} \int_{(2^kB)^c}b(x)\, dx, \qquad  k=1,2,...\ .$$
Here and throughout,  $\rho B=B(y_0,\rho r)$ for $B=B(y_0,r)$.

We claim that
\begin{equation}\label{sumhar}
\SumNN \| b_k\|_{H^1_\Delta(\RR ^n)}\leq C.
\end{equation}

From Lemma \ref{Lrr} and Proposition \ref{lem_w} we conclude that there exists $\sigma >0$ such that
\begin{equation}\label{fdgg}
\begin{split}
|c_k|&\leq |2^kB|^{-1}\int_{(2^kB)^c}V(x)|\Delta^{-1}a(x)|\, dx \leq C |2^kB|^{-1}\int_{(2^kB)^c}\int_BV(x)\,|x-y|^{2-n}|a(y)|\, dy\, dx\\
&\leq C |2^kB|^{-1} \int_B |a(y)| \int_{(2^kB)^c}V(x)\,|x-y_0|^{2-n}\, dx \, dy \leq C|2^kB|^{-1} (2^kr)^{-\sigma}.
\end{split}
\end{equation}
Note that $\supp \, b_k \subseteq 2^kB$ and $\int_\Rn b_k(x)\, dx=0$. Therefore \eqref{sumhar} follows, if we have verified  that there exists $q>1$ such that
\begin{equation}\label{aaaa3}
\SumNN \|b_k\|_{L^q(\Rn)} |2^kB|^{1-1/q} \leq C,
\end{equation}
where $C$ does not depend on $a$.

If  $k=1$, then
$$|b_1(x)|\leq |c_1| \chi_{2B}(x) + |a(x)| +  V(x) |\Delta^{-1}a(x) |\chi_{2B}(x)$$ and
$$\|b_1\|_{L^q(\Rn)}\leq C|2B|^{-1+1/q} + \(\int_{2B} V(x)^q |\Delta^{-1}a(x)|^q dx\)^{1/q}.$$
Notice that
$$\(\int_{2B} V(x)^q |\Delta^{-1}a(x)|^q dx\)^{1/q}\leq C r^2|B|^{-1} \Summ \(\int_{2B} V_j(x)^q dx\)^{1/q}.$$
We can consider only the summand with $V_1$. By the H\"older inequality,
\begin{align*}
r^2|B|^{-1} \(\int_{2B} V_1(x)^q dx\)^{1/q}&\leq C r^2|B|^{-1} r^{\wt n_1/q} \|V_1\|_{L^{qs}(\VVV)} r^{n_1(1-1/s)/q}\cr
&=C|B|^{-1+1/q} r^{2-n_1/(sq)}.
\end{align*}
Choosing $q,s>1$ such that $V_1\in L^{qs}(\VVV)$ and $2-n_1/(qs)<0$ we get
\begin{equation}\label{aaaa1}
\|b_1\|_{L^q(\Rn)}\leq C |2B|^{-1+1/q}.
\end{equation}

For $k>1$, by the definition of $b_k$, we get that
$$\|b_k\|_{L^q(\Rn)} \leq |c_{k-1}||2^{k-1}B|^{1/q} +|c_{k}||2^{k}B|^{1/q} + \|b\|_{L^q(2^kB \backslash 2^{k-1}B)}$$
From \eqref{fdgg} we see that first two summands can be estimated by $C|2^kB|^{-1+1/q}2^{-k\sigma}.$ Then it remains to deal with the last summand. By using Lemma \ref{Lrr} there exists $\sigma ' >0$ such that for $q\in (1,1+\e]$ we have
\begin{equation}\label{aaaa2}
\begin{split}
\|b\|_{L^q(2^kB \backslash 2^{k-1}B)} &\leq C\(\int_{2^kB\backslash 2^{k-1}B}\( \int_B V(x)|x-y|^{2-n} |a(y)|\, dy\)^q dx\)^{1/q}\\
&\leq C \(\int_{(2^{k-1}B)^c} V(x)^q |x-y_0|^{q(2-n)}dx\)^{1/q}\leq C (2^kr)^{-\sigma '}\\
&= C|2^kB|^{-1+1/q}(2^kr)^{-\sigma '+n-n/q} \leq C|2^kB|^{-1+1/q}2^{-k\delta}
\end{split}
\end{equation}
provided that  $\delta =-\sigma '+n-n/q>0$.

The estimate \eqref{aaaa3} follows from \eqref{aaaa1} and \eqref{aaaa2}. This ends Case 1.

{\it Case 2: $r<1$.} Fix $N\in \NN\cup \{0\}$ such that $1/2 <2^Nr \leq 1$. Then
\begin{align*}
b(x)=&(a(x)\w(x)-c_0\chi_B(x))+\sum_{i=1}^N c_0|B| \(|2^{i-1}B|^{-1}\chi_{2^{i-1}B}(x)-|2^{i}B|^{-1}\chi_{2^{i}B}(x)\)\cr
&+\(b(x)-a(x)\w(x) +c_0 |B||2^NB|^{-1}\chi_{2^NB}(x)\) = d_0(x)+\sum_{i=1}^N d_i(x) + b'(x),
\end{align*}
where
$$c_0 =|B|^{-1}\int_B a(x)\w(x)\, dx.$$

By using $\int_B a =0$ and property \eqref{prop_a} from Proposition \ref{lem_w}, we obtain
\begin{equation}\label{c00}
|c_0|\leq |B|^{-1}\int_B |a(x)||\w(x)-\w(y_0)|dx\leq r^\delta |B|^{-1}.
\end{equation}

Observe that $\supp\, d_0 \subseteq B$, $\int_B d_0 =0$, and $\|d_0\|_\8 \leq C|B|^{-1}$. Similarly, for $i=1,...,N$, $\supp\, d_i\subseteq 2^iB$, $\int d_i =0$ and $\|d_i\|_\8\leq Cr^\delta |2^iB|^{-1}$.
Therefore
\begin{align*}
\sum_{i=0}^N\|d_i\|_{\HarD} \leq C+CNr^{\delta} \leq C-Cr^\delta \log_2 r\leq C.
\end{align*}

Denote $B' = 2^NB$. Obviously $|B'|\sim 1$. To deal with $b'(x)$ we apply the method from Case~1 with respect to $B'$, i.e.,
$$
b'=(b'(x) - c'_1)\chi_{2B'}(x) + \sum_{k=2}^\8\(b'(x) \chi_{2^kB'\backslash 2^{k-1}B'}(x)+c'_{k-1}\chi_{2^{k-1}B'}(x) - c'_k\chi_{2^kB'}(x)\)
=\SumNN b'_k,
$$
where
$$c'_k=
 -|2^kB'|^{-1} \int_{(2^kB')^c}b'(x)\, dx.$$
The arguments that we used in Case 1 also give
\begin{equation}\label{c000}
|c_k'|\leq C|2^kB'|^{-1} 2^{-k\sigma}\ \ \text{for } k=1,2,... \quad \text{and} \quad \sum_{k=2}^\8 \| b'_k\|_{H^1_\Delta(\RR ^n)}\leq C.
\end{equation}
It remains to obtain that
\begin{equation}\label{vbn}
\| b'_1\|_{H^1_\Delta(\RR ^n)}\leq C.
\end{equation}
It is immediate that $\supp\, b_1' \subseteq 2B'$ and $\int_{2B'}b_1' =0$. Also,
\begin{equation}\label{ooop}
\|b_1'\|_{L^q(\Rn)} \leq \(\int_{2B'}V(x)^q|\Delta^{-1}a(x)|^q\)^{1/q} + C|c_0||B||2B'|^{-1+1/q}+C|c_1'||2B'|^{1/q}.
\end{equation}
By \eqref{c00} and \eqref{c000} only the first summand needs to be estimated. Observe that
$$|\Delta^{-1}a(x)| \leq \int_B|x-y|^{2-n}|a(y)|\, dy \leq
\left\{\begin{array}{l l}
Cr^{2-n} & \text{if } |x-y_0|<2r\\
C|x-y_0|^{2-n} & \text{if }|x-y_0|>2r
\end{array}
\right\}
\leq C|x-y_0|^{2-n}.$$
Therefore, by using Lemma \ref{Lr}, we get
\begin{equation*}
\|b_1'\|_{L^q(\Rn)}\leq C
\end{equation*}
and \eqref{vbn} follows, which finishes Case 2 and the proof of \eqref{ineq1}.

In order to finish the proof of Theorem \ref{tw2} it remains to prove that
\begin{equation}\label{ppppp}
\|f\|_{H^1_L(\Rn)} \leq C \|\w f\|_{H^1_\Delta(\RR^n)}.
\end{equation}
In virtue of Theorem \ref{tw1} the inequality \eqref{ppppp} is equivalent to
\begin{equation}\label{dfghdfgh}
\big\|(I-VL^{-1})\(g/\w\)\big\|_{H^1_\Delta(\RR^n)} \leq C\|g\|_{H^1_\Delta (\RR^n)}.
\end{equation}

Assume that $a$ is an $H^1_\Delta (\Rn)$-atom (see \eqref{atomatom}). Set $b=(I-VL^{-1})(a/\w)$. The proof will be finished if we have obtained that
\begin{equation}\label{ogratom2}
\|b\|_{H^1_\Delta(\Rn)}\leq C
\end{equation}
with $C$ independent of atom $a$. By \eqref{rel_ab}, we have
$$\int_\Rn b(x)\,dx =\int_\Rn a(x)\, dx=0.$$
Note that  the proof of \eqref{ogratom} only relies on  estimates of  $|\Gamma_0(x,y)|$ from above by $C|x-y|^{2-n}$. The same estimates hold for $|\Gamma (x,y)|$. Moreover, the weight $1/\w$ has the same properties as $\w$, that is,  boundedness from above and below by positive constants and the H\"older condition. Therefore the proof of \eqref{ogratom2} follows by the same arguments. Details are omitted.
\end{proof}

\section{Proof of Theorem \ref{tw3}}
By \eqref{pert} we get a formula similar to \eqref{pert1}.
\begin{equation}\label{pert3}
K_t- P_t(I-VL^{-1}) = Q_t'-W_t',
\end{equation}
where
\begin{equation*}\label{pert4}
W_t' = \int_0^t P_{t-s}\,V\, K_s\, ds, \qquad Q_t' = \int_0^\8 P_t \, V \, K_s \, ds.
\end{equation*}

Recall that for $i=1,...,n$ we denote by $\dxi$ the derivative in the direction of $i$-th standard coordinate. For $f \in \LL$ from \eqref{pert1} and \eqref{pert3} we get
\begin{equation}\label{rrrr}
\iie \dxi K_t f \dtt - \iie \dxi  P_t (I-VL^{-1}) f \dtt = \ww_\ie 'f + \qq_\ie 'f +\ww_\ie f +\qq_\ie f,
\end{equation}
\begin{align*}
\qq_\ie = \int_2^{\e^{-1}} \dxi Q_t \dtt, \qquad \qq_\ie ' = \int_\e^2 \dxi Q_t'\dtt,\cr
\ww_\ie = -\int_2^{\e^{-1}} \dxi W_t \dtt, \qquad \ww_\ie ' = -\int_\e^2 \dxi W_t' \dtt.
\end{align*}

All the operators above are well-defined and bounded on $\LL$. By the theory of the classical Hardy spaces $\rr_{\Delta,i} f= \lim_{\e \to 0} \iie \dxi P_t f \dtt \in \LL$ for every $i=1,...,n$, exactly when $f\in \HarD$. Moreover,
\begin{equation}\label{jjj}
\|f\|_{\HarD} \sim \|f\|_{\LL} + \sum_{i=1}^n \|\rr_{\Delta,i} f\|_{\LL}.
\end{equation}

The subsequent four lemmas prove that the operators $\qq_\ie, \qq_\ie ',\ww_\ie, \ww_\ie '$ converge strongly as $\e \to 0$ in the space of $\LL$-bounded operators.

\begin{lema}
For every $i=1,...,n$ the operators $\qq_\ie$ converge as $\e\to 0$ is norm-operator topology.
\end{lema}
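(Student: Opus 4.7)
The plan is to prove norm-operator convergence by verifying a Cauchy criterion: I will show that
$$\int_2^{\infty} \|\partial_i Q_t\|_{L^1 \to L^1}\, \frac{dt}{\sqrt t} < \infty,$$
which immediately forces $\mathcal{Q}_{i,\varepsilon} = \int_2^{\varepsilon^{-1}} \partial_i Q_t \, dt/\sqrt t$ to converge in operator norm as $\varepsilon \to 0$.

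To estimate $\|\partial_i Q_t\|_{L^1 \to L^1}$ it suffices to bound $\sup_{y \in \RR^n} \int_{\RR^n} |\partial_i Q_t(x,y)|\, dx$. Differentiating under the integral, the kernel of $\partial_i Q_t$ is
$$\partial_i Q_t(x,y) = \int_{\RR^n} \partial_i P_t(x-z) \int_t^{\infty} V(z) K_s(z,y)\, ds\, dz.$$
I would use two ingredients. First, the standard Gaussian derivative bound
$$|\partial_i P_t(x-z)| \leq C\, t^{-1/2}\, \widetilde P_t(x-z)$$
for some Schwartz function $\widetilde P$, integrable in $x$ uniformly in $t$. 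Second, exactly as in the proof of Lemma \ref{lem3}, for any small $\eta > 0$ and $t \geq 2$,
$$\int_t^{\infty} K_s(z,y)\, ds \leq C \int_t^{\infty} s^{-\eta}\, s^{-n/2+\eta}\exp\!\Big(-\frac{|z-y|^2}{4s}\Big) ds \leq C\, t^{-\eta}\, |z-y|^{2-n+2\eta}.$$

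Combining these,
$$|\partial_i Q_t(x,y)| \leq C\, t^{-1/2-\eta} \int_{\RR^n} \widetilde P_t(x-z)\, V(z)\, |z-y|^{2-n+2\eta}\, dz.$$
Integrating in $x$ and using $\|\widetilde P_t\|_{L^1} \leq C$, we obtain
$$\sup_{y} \int_{\RR^n} |\partial_i Q_t(x,y)|\, dx \leq C\, t^{-1/2-\eta}\, \sup_y \int_{\RR^n} V(z)\, |z-y|^{2-n+2\eta}\, dz \leq C\, t^{-1/2-\eta},$$
where the last inequality uses Lemma \ref{Lr} with $\mu = 2\eta$, $r = 1$ (valid for $\eta$ small enough). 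Therefore
$$\int_2^{\infty} \|\partial_i Q_t\|_{L^1 \to L^1}\, \frac{dt}{\sqrt t} \leq C \int_2^{\infty} t^{-1-\eta}\, dt < \infty,$$
and $\{\mathcal{Q}_{i,\varepsilon}\}$ is Cauchy in the operator norm as $\varepsilon \to 0$.

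The routine estimates are all already present in the proofs of Lemmas \ref{lem3} and \ref{Lr}; the only thing to verify is that one gains \emph{both} a factor $t^{-1/2}$ from the derivative \emph{and} an extra factor $t^{-\eta}$ from the tail integral, so that after multiplying by $t^{-1/2}$ from $dt/\sqrt t$ we end up with a power strictly less than $-1$. There is no essential obstacle; the only delicate point is to pick $\eta > 0$ so small that $|2\eta| \leq \lambda$, with $\lambda$ given by Lemma \ref{Lr}, in order to apply that lemma.
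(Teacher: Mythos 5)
Your proposal is correct and takes essentially the same approach as the paper: you use the Gaussian derivative bound $|\partial_i P_t|\leq Ct^{-1/2}\phi_t$ to gain one factor of $t^{-1/2}$, extract an extra $t^{-\eta}$ (at the cost of $|z-y|^{2\eta}$) from the tail integral $\int_t^\infty K_s\,ds$ exactly as in Lemma \ref{lem3}, and then close the argument with Lemma \ref{Lr}; the paper organizes the same estimates inline (and reduces to a single $V_j$ by linearity) rather than via an explicit operator-norm Cauchy criterion, but this is only a cosmetic difference.
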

 \begin{proof}
The operators $\qq_\ie$ have the integral kernels
$$\qq_\ie(x,y)=\int_2^{\e^{-1}} \int_t^\8 \int_\Rn \dxi P_t(x-z)V(z)K_s(z,y)\, dz \, ds \, \dtt.$$
The lemma will be proved when we have obtained
$$\sup_{y\in \Rn} \int_\Rn \QQ_i^{(j)}(x,y) dx \leq C,$$
where
$$
\QQ_i^{(j)}(x,y)=\int_2^\8 \int_t^\8 \int_\Rn |\dxi P_t(x-z)|V_j(z)K_s(z,y)\, dz \, ds \, \dtt.
$$
Since $|\dxi P_t(x-z)|\leq Ct^{-1/2} \phi_t(x-z)$ for some $\phi \in \ss(\RR^n)$ we get
 \begin{equation}
\begin{split}
\int_\Rn \QQ_i^{(j)}(x,y) dx &\leq C \int_\Rn \int_2^\8 \int_	t^\8 \int_\Rn t^{-1/2} \phi_t(x-z) V_j(z)K_s(z,y)\, dz \, ds \, \dtt\, dx\cr
 &\leq C \int_2^\8 \int_t^\8 \int_\Rn t^{-1} V_j(z)K_s(z,y)\, dz \, ds \, dt\cr
 &\leq C \int_2^\8 \int_t^\8 \int_\Rn t^{-1-\e} V_j(z) s^{-n/2+\e}\exp(-|z-y|^2/4s)\, dz \, ds \, dt\cr
 &\leq C \(\int_2^\8 t^{-1-\e}dt\) \cdot\( \int_\Rn V_j(z)|z-y|^{2-n+2\e}\, dz\)\leq C,
\end{split}
 \end{equation}
where in the last inequality we used Lemma \ref{Lr} and $C$ does not depend on $y\in \Rn$.
 \end{proof}

\begin{lema}
For every $i=1,...,n$ the operators $\ww_\ie$ converge as $\e \to 0$ in norm-operator topology.
\end{lema}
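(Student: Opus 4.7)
The plan is to reduce the problem to the uniform Schur bound
$$\sup_{y\in\Rn} \int_\Rn \int_2^\infty \left|\dxi W_t(x,y)\right| \dtt \, dx < \infty,$$
which by Fubini and the Schur test yields both $\LL$-boundedness of $\ww_\ie$ for every $\e>0$ and the Cauchy property of $\{\ww_\ie\}$ as $\e\to 0$. As in Lemma \ref{lem1} it suffices to treat one summand $W_t^{\jjjj}$ and split the inner $s$-integral at $s=t^\beta$ with $\beta\in(0,1)$ close to $1$, producing pieces $F_1$, $F_2'$, $F_2''$ exactly as in Lemma \ref{lem1}.

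For $F_1$ I would combine the cancellation-in-time bound
$$\left|\dxi\bigl(P_{t-s}(x-z)-P_t(x-z)\bigr)\right| \le C\,\frac{s}{t}\,t^{-1/2}\,\phi_t(x-z),\qquad \phi\in\ss(\Rn),$$
valid for $t\ge 2$ and $s\le t^\beta<t$, with the trivial estimate $\int_0^{t^\beta}K_s(z,y)\,ds\le\Gamma(z,y)\le C|z-y|^{2-n}$. This gives
$$|\dxi F_1(x,y;t)|\le C\,t^{\beta-3/2}\int_\Rn \phi_t(x-z)\,V_1(z)\,|z-y|^{2-n}\,dz.$$
Integrating first in $x$ (using $\int\phi_t=\|\phi\|_1$) and then applying Lemma \ref{Lr} to the remaining spatial integral yields a uniform bound $\int_\Rn|\dxi F_1(x,y;t)|\,dx\le C t^{\beta-3/2}$; the outer integral $\int_2^\infty t^{-1/2}\cdot t^{\beta-3/2}\,dt=\int_2^\infty t^{\beta-2}\,dt$ then converges since $\beta<1$.

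For $F_2'$ and $F_2''$ I would replay the corresponding arguments of Lemma \ref{lem1}, upgrading each heat-kernel bound to its gradient version. In $F_2'$, where $\dxi$ lands on $P_{t-s}$, the key replacement is
$$\int_{t^\beta}^t |\dxi P_{t-s}(x-z)|\,ds \le C\,|x-z|^{1-n}\exp(-|x-z|^2/ct),$$
used in tandem with \eqref{eee}. In $F_2''$, where $\dxi$ falls on $P_t$, I would use $|\dxi P_t(x-z)|\le C t^{-1/2}\phi_t(x-z)$ together with $\int_{t^\beta}^t K_s(z,y)\,ds\le C t^{-\beta\e}|z-y|^{2-n+2\e}$. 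In both cases, integrating successively in $x$ and $z$ produces a $t$-exponent which, combined with the $t^{-1/2}$ from $\dtt$, is strictly smaller than $-1$, and the spatial integrals are controlled by Lemma \ref{Lr} applied with a small $\mu$.

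The main obstacle is the bookkeeping of $t$-exponents after differentiation: unlike Lemma \ref{lem1}, where a supremum in $t$ was absorbed by the structure of the maximal operator, here one must integrate against $\dtt$ on $[2,\infty)$, so the total exponent of $t$ in each bound must lie strictly below $-1$. Ensuring this simultaneously in $F_1$, $F_2'$ and $F_2''$ forces $\beta<1$ close to $1$ together with small $\e>0$; checking that the tolerances in Lemma \ref{Lr} admit such a joint choice is the only delicate technical point, and the rest of the argument is a parallel of Lemma \ref{lem1}.
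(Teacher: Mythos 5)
Your proposal is correct and essentially mirrors the paper's argument: you establish the uniform Schur bound $\sup_{y}\int_\Rn\int_2^\infty|\dxi W_t(x,y)|\,\dtt\,dx<\infty$ by splitting the $s$-integral at $t^\beta$ and upgrading each estimate of Lemma~\ref{lem1} to its gradient form, with Lemma~\ref{Lr} absorbing the resulting weights $|z-y|^{2-n+\mu}$ after the $\beta$ and $\e$ are tuned to its tolerance. The only cosmetic difference is that you split the $s>t^\beta$ range further into the $F_2'$/$F_2''$ pieces from Lemma~\ref{lem1}, whereas the paper treats it as a single term $J_2$ using $|\dxi(P_{t-s}-P_t)|\le \psi_{t-s}/\sqrt{t-s}+\psi_t/\sqrt{t}$, which is exactly the triangle inequality underlying your split.
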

 \begin{proof}
 The operators $\ww_\ie$ have the integral kernels
\begin{align*}
\ww_\ie(x,y)&=\int_2^{\e^{-1}} \int_0^t \int_\Rn \dxi \( P_{t-s}(x-z)-P_t(x-z)\)V(z)K_s(z,y)\, dz \, ds \, \dtt.
\end{align*}
Set
$$\WW_i^{(j)}(x,y) = \int_2^{\8} \int_0^t \int_\Rn \big|\dxi \( P_{t-s}(x-z)-P_t(x-z)\)\big|V_j(z)K_s(z,y)\, dz \, ds \, \dtt.$$
The lemma will be proved when we have obtained that
\begin{equation}\label{rrreee}
\sup_{y\in \Rn} \int_\Rn \WW_i^{(j)}(x,y) dx \leq C.
\end{equation}

For fixed $y\in \Rn$ and $0<\beta<1$, $\beta$ will be determined later on, we write
\begin{align*}
 \int_\Rn \WW_i^{(j)}(x,y) dx &\leq \int_\Rn \int_2^\8 \int_0^t \int_\Rn \big|\dxi\( P_{t-s}(x-z)-P_t(x-z)\)\big|V_j(z)K_s(z,y)\, dz \, ds \, \dtt\, dx\cr
 &\leq \int_0^{t^\beta}...\,ds + \int_{t^\beta}^t...\, ds = J_1+J_2.
\end{align*}

 Observe that there exists $\psi \in \ss(\RR^n)$, $\psi\geq 0$ such that for $s\in(0,t^{\beta})$ and $t>2$ we have
 $$\Big|\dxi\( P_{t-s}(x)-P_t(x)\)\Big|\leq st^{-3/2} \psi_t(x).$$
 Thus by using Lemma \ref{Lr} we get
\begin{equation}
\begin{split}\label{bbbhh}
 J_1 &\leq \int_\Rn \int_2^\8 \int_0^{t^{\beta}} \int_\Rn st^{-2}\psi_t(x-z)V_j(z)K_s(z,y)\, dz \, ds \, dt\, dx\cr
 &\leq C \int_2^\8 t^{-2+\beta} \, dt \cdot \int_\Rn V_j(z) |z-y|^{2-n}dz\leq C_1.
\end{split}
\end{equation}
 Note that if $t>2$ and $s\in [t^\beta,t]$ then $K_s(z)\leq Ct^{-\beta n/2} \exp(-|z|^2/ct)$. Choosing $0<\beta<1$, $\beta $ close to 1, and applying Lemma \ref{Lr} we obtain
\begin{equation}\label{bbbh}
\begin{split}
J_2 &\leq \int_\Rn \int_2^\8 \int_{t^\beta}^t \int_\Rn \(\frac{\psi_{t-s}(x-z)}{\sqrt{t-s}} +\frac{\psi_t(x-z)}{\sqrt{t}}\)V_j(z)K_s(z,y)\, dz \, ds \, \dtt\, dx\cr
&\leq C\int_2^\8 \int_0^t \int_\Rn \(((t-s)t)^{-1/2} +t^{-1}\)V_j(z)t^{-\beta n/2} \exp(-|z-y|^2/ct)\, dz \, ds \, dt\cr
&\leq C \int_2^\8 \int_\Rn V_j(z)t^{-\beta n/2} \exp(-|z-y|^2/ct)\, dz \, dt
\leq C\int_\Rn V_j(z) |z-y|^{2-\beta n}dz \leq C_2.
\end{split}
\end{equation}
 Notice that the  constants $C_1$ and $C_2$  in \eqref{bbbhh} and \eqref{bbbh} respectively do not depend on $y\in\Rn$. Thus \eqref{rrreee} follows.
 \end{proof}

\begin{lema}
For $i=1,...,n$ the operators $\ww_\ie '$ converge as $\e \to 0$ in norm-operator topology.
\end{lema}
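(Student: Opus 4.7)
The goal is to show $\{\ww_\ie'\}_{\e>0}$ is Cauchy in the operator-norm topology on $\mathcal{B}(\LL)$ as $\e\to 0$, which (together with boundedness of each $\ww_\ie'$ on $\LL$ for fixed $\e$) yields the claimed convergence. Since
\begin{equation*}
\ww_{\e_1}' - \ww_{\e_2}' = -\int_{\e_1}^{\e_2}\dxi W_t'\, \dtt,
\end{equation*}
it suffices to establish the uniform $L^1$-kernel estimate
\begin{equation*}
\sup_{y\in\Rn}\int_\Rn\int_0^\delta t^{-1/2}\,|\dxi W_t'(x,y)|\, dt\, dx \longrightarrow 0\qquad\text{as }\delta\to 0^+.
\end{equation*}

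\textbf{Key steps.} First I would use the classical Gaussian gradient estimate $\|\dxi P_{t-s}\|_{L^1(\Rn)}\leq C(t-s)^{-1/2}$ to obtain, via Fubini,
\begin{equation*}
\int_\Rn|\dxi W_t'(x,y)|\, dx \leq C\int_0^t (t-s)^{-1/2}\int_\Rn V(z) K_s(z,y)\, dz\, ds.
\end{equation*}
Next, by Feynman-Kac ($K_s\leq P_s$), the product decomposition $P_s(z-y)=P_s(z_j-y_j)P_s(\tilde z_j-\tilde y_j)$, and H\"older's inequality applied in $\VV_j$ with $V_j\in L^r(\VV_j)$ for some $r>n_j/2$ (an option permitted by $(A_3)$), we obtain $\sup_y\int_\Rn V(z)K_s(z,y)\, dz\leq Cs^{-\alpha}$ with $\alpha:=\max_j n_j/(2r)<1$. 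Combined with the standard Euler-integral computation $\int_0^t (t-s)^{-1/2}s^{-\alpha}\, ds=C(\alpha)\, t^{1/2-\alpha}$, this yields
\begin{equation*}
\int_\Rn|\dxi W_t'(x,y)|\, dx\leq C t^{1/2-\alpha}
\end{equation*}
uniformly in $y$. Consequently $\int_0^\delta t^{-1/2}\cdot t^{1/2-\alpha}\, dt = C\delta^{1-\alpha}/(1-\alpha)\to 0$, establishing the required Cauchy estimate.

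\textbf{Main obstacle.} The only delicate point is extracting the sharp bound $\sup_y\int V(z)K_s(z,y)\, dz\leq Cs^{-\alpha}$ with $\alpha<1$; this is what makes the joint singularity at $t=s$ and at $s=0$ integrable against $dt/\sqrt{t}$. The assumption $(A_3)$ is tuned precisely to provide this slack by allowing us to take $r$ strictly greater than $n_j/2$. The rest of the calculation is a direct Beta-type integration and parallels the short-time part of the proof of Lemma \ref{lem2}.
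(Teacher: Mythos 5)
Your proposal is correct, and it takes a genuinely different and somewhat cleaner route than the paper. The paper estimates the kernel $\WW_i^{(j)'}(x,y)$ directly by splitting the inner $s$-integral into $\int_0^{t/2}$ and $\int_{t/2}^t$: on the first piece it pulls out $(t-s)^{-1/2}\sim t^{-1/2}$ and integrates $K_s$ in $s$ up to $t$ using \eqref{kiu}, producing a factor $|z-y|^{2-n}\exp(-|z-y|^2/ct)$ and ultimately a logarithmic singularity at $z=y$ which is then absorbed via Lemma~\ref{Lr}; on the second piece it uses $K_s\le Ct^{-n/2}\exp(-|z-y|^2/ct)$ and integrates $(t-s)^{-1/2}$ over $s$. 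Your version instead isolates the two singularities $(t-s)^{-1/2}$ and $s^{-\alpha}$ and handles them with one Euler Beta integral, after first reducing the $x$-integral via $\|\dxi P_{t-s}\|_{L^1}\le C(t-s)^{-1/2}$ and the $z$-integral via the uniform bound $\sup_y\int V(z)K_s(z,y)\,dz\le C s^{-\alpha}$, $\alpha<1$, which follows from $K_s\le P_s$, the tensor structure $P_s(z-y)=P_s(z_j-y_j)P_s(\tilde z_j-\tilde y_j)$, and H\"older with some $r_j\in(n_j/2,\,n_j/2+\kappa]$ from $(A_3)$. Both proofs ultimately rest on the same slack in $(A_3)$ (the ability to take $r_j>n_j/2$), but your computation is more streamlined, avoids the logarithmic case analysis near $z=y$, and has the extra virtue of producing the explicit rate $\|\ww_{\e_1}'-\ww_{\e_2}'\|_{L^1\to L^1}\le C|\e_2-\e_1|^{1-\alpha}$, which makes the Cauchy property (and hence convergence in the complete space $\mathcal B(L^1)$) immediate. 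One tiny notational point: $\alpha$ should be $\max_j n_j/(2r_j)$ with a possibly different exponent $r_j$ for each $j$, not a single $r$; this does not affect the argument.
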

\begin{proof}
The operators $\ww_\ie '$ have the integral kernel
$$\ww_\ie '(x,y)=\int_\e^2 \int_0^t \int_\Rn \dxi P_{t-s}(x-z)V(z)K_s(z,y)\, dz \, ds \, \dtt.$$
The lemma will be proved if we have shown that
\begin{equation}\label{nnn}
\sup_{y\in \Rn}\int_\Rn \WW_i^{(j)'}(x,y)\, dx \leq C,
\end{equation}
where
$$ \WW_i^{(j)'}(x,y) =  \int_0^2 \int_0^t \int_\Rn \big| \dxi P_{t-s}(x-z)\big|V_j(z)K_s(z,y)\, dz \, ds \, \dtt .$$

Fix $y\in \Rn$. Observe that
\begin{align*}
\int_\Rn \WW_i^{(j)'}(x,y) dx &\leq \int_\Rn \int_0^2 \int_0^t \int_\Rn \big|\dxi P_{t-s}(x-z)\big|V_j(z)K_s(z,y)\, dz \, ds \, \dtt\, dx\cr
&\leq \int_0^{t/2}...\, ds+\int_{t/2}^t...\, ds = J_3+J_4.
\end{align*}
There exists $\psi \in \ss(\RR^n)$, $\psi \geq 0$, such that
\begin{align*}
J_3&\leq  \int_\Rn \int_0^2 \int_0^{t/2} \int_\Rn (t(t-s))^{-1/2}\psi_{t-s}(x-z)V_j(z)K_s(z,y)\, dz \, ds \, dt \, dx\cr
&\leq C \int_0^2 \int_0^t \int_\Rn t^{-1}V_j(z)K_s(z,y)\, dz \, ds \, dt\cr
&\leq C \int_0^2 \int_\Rn t^{-1}V_j(z)|z-y|^{2-n} \exp\(-|z-y|^2/ct\) dz \, dt\cr
&\leq C \int_{|z-y|> 1/2} V_j(z)|z-y|^{2-n} dz + \int_{|z-y|\leq 1/2} V_j(z)|z-y|^{2-n}|\log|z-y|| dz \leq C_3
\end{align*}
and
\begin{align*}
J_4&\leq C \int_\Rn \int_0^2 \int_{t/2}^t \int_\Rn (t(t-s))^{-1/2}\psi_{t-s}(x-z)V_j(z)t^{-n/2}\exp\(-\frac{|z-y|^2}{ct}\)\, dz \, ds \, dt \, dx\cr
&\leq C \int_0^2 \int_0^{t/2} \int_\Rn (ts)^{-1/2}V_j(z)t^{-n/2}\exp\(-\frac{|z-y|^2}{ct}\)\, dz \, ds \, dt\cr
&\leq C \int_\Rn V_j(z) \int_0^\8 t^{-n/2}\exp\(-\frac{|z-y|^2}{ct}\)\, dt \, dz\leq C \int_\Rn V_j(z) |z-y|^{2-n}dz\leq C_4
\end{align*}
with constants $C_3$ and $C_4$ independent of $y\in \Rn$. So we have obtained \eqref{nnn}.
\end{proof}

\begin{lema}
For $i=1,...,n$ the operators $\qq_\ie '$ converge strongly as $\e\to 0$.
\end{lema}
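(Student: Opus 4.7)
The plan is to factor $\qq_\ie' = T_\e \circ (VL^{-1})$ with $T_\e = \int_\e^2 \partial_i P_t \dtt$, and to prove pointwise convergence of $\qq_\ie' f$ in $L^r(\Rn)$ for some $r>1$ depending only on the potential. In contrast with the preceding three lemmas, norm convergence on $L^1$ cannot hold here: the truncated heat-semigroup Riesz operators $T_\e$ are not uniformly bounded on $L^1$ as $\e\to 0$, so the strong convergence has to live in a space strictly larger than $L^1$.

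First I would invoke Lemma \ref{Lr} with $\mu=0$ and some $r\in(1,1+\lambda]$, which gives the uniform bound $\sup_{y\in\Rn}\|V(\cdot)|\cdot-y|^{2-n}\|_{L^r(\Rn)}\leq C$. Minkowski's integral inequality then shows
$$\|VL^{-1}f\|_{L^r(\Rn)} \leq \int_\Rn \|V(\cdot)|\cdot-y|^{2-n}\|_{L^r(\Rn)}\,|f(y)|\,dy \leq C\|f\|_{L^1(\Rn)},$$
so $VL^{-1}$ maps $L^1(\Rn)$ boundedly into $L^r(\Rn)$. This is the gain of integrability that $\qq_\ie$, $\ww_\ie$, $\ww_\ie'$ could absorb directly into their $L^1$-kernel estimates but that $\qq_\ie'$ truly requires.

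Second I would appeal to the classical theory of the heat-semigroup form of the Riesz transform on $L^r$. The Fourier multiplier of $T_\e$ is
$$m_\e(\xi) = 2\pi i \xi_i \int_\e^2 e^{-t|\xi|^2} t^{-1/2}\,dt,$$
and a direct computation shows that $m_\e$ and its first-order derivatives satisfy the Mikhlin (or H\"ormander) multiplier conditions uniformly in $\e$. Hence $\{T_\e\}_{\e>0}$ is uniformly bounded on $L^r(\Rn)$ for $1<r<\infty$. For $g\in\ss(\Rn)$ one has $\|T_\e g - T_0 g\|_{L^r} = \|\int_0^\e \partial_i P_t g\,\dtt\|_{L^r} \leq \|\partial_i g\|_{L^r}\int_0^\e t^{-1/2}\,dt = O(\sqrt{\e})$, and density of $\ss(\Rn)$ together with the uniform bound extends $T_\e g \to T_0 g$ in $L^r$ to every $g\in L^r(\Rn)$. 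Combining with the first step yields $\qq_\ie' f = T_\e(VL^{-1}f) \to T_0(VL^{-1}f)$ in $L^r(\Rn)$ for every $f\in L^1(\Rn)$, which is the desired strong convergence. The main obstacle is conceptual rather than technical: one has to accept that for $\qq_\ie'$ strong convergence must happen in $L^r$ for some $r>1$ and not in $L^1$; once this shift of target space is made, Lemma \ref{Lr} and uniform Mikhlin bounds combine routinely.
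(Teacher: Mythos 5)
Your factorization $\qq_\ie' = T_\e\circ(VL^{-1})$ with $T_\e = \int_\e^2 \dxi P_t\,\dtt$ is correct, and the observations that $VL^{-1}:L^1(\Rn)\to L^r(\Rn)$ for some $r>1$ (via Lemma \ref{Lr} and Minkowski) and that $\{T_\e\}$ is uniformly bounded on $L^r(\Rn)$ and strongly convergent there are both sound. But your central conceptual claim --- that strong convergence ``cannot'' hold with target $L^1(\Rn)$ and must be displaced to $L^r$ --- is wrong, and the paper proves precisely the opposite. The fact that the truncated Riesz operators $T_\e$ are not uniformly $L^1$-bounded on \emph{all} of $L^1$ does not preclude uniform $L^1\to L^1$ boundedness of the composition, because $VL^{-1}$ does not map $L^1$ onto a generic subset of $L^r$: for each $y$ the ``column'' $\phi_y(z)=V(z)\Gamma(z,y)$ has a dyadic decomposition $\phi_y=\sum_k\phi_k$ with $\supp\,\phi_k\subseteq B(y,2^k)$ and $\|\phi_k\|_{L^r}\leq C|B(y,2^k)|^{-1+1/r}2^{-\sigma k}$, and Riesz-truncation maximal operators applied to such normalized, localized blocks are uniformly $L^1$-integrable (the estimate $\|\sup_{\e}|H_\ie* a|\|_{L^1}\leq C\tau$ in the paper). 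Summing over $k$ gives the Schur-type bound $\sup_y\int_\Rn\sup_{0<\e<2}|\qq_\ie'(x,y)|\,dx\leq C$, which yields uniform $L^1\to L^1$ operator bounds, and dominated convergence then upgrades pointwise kernel convergence to strong $L^1\to L^1$ convergence.

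This is not a cosmetic difference. The lemma is used in \eqref{rrrr} together with the other three lemmas, all of which produce $L^1\to L^1$ bounded limits, to compare $\iie\dxi K_t f\,\dtt$ with $\iie\dxi P_t(I-VL^{-1})f\,\dtt$ modulo operators that are bounded on $L^1$. With your conclusion ($\qq_\ie'f\to\qq_i'f$ only in $L^r$, and no control of $\|\qq_i'f\|_{L^1}$) you cannot conclude that $\rr_{L,i}f\in L^1$ iff $\rr_{\Delta,i}(I-VL^{-1})f\in L^1$, which is the equivalence Theorem \ref{tw3} rests on. To repair the argument you would need to keep your factorization but replace the Mikhlin $L^r$-bound on $T_\e$ with a weighted maximal estimate for $T_\e$ acting on the dyadic blocks of $VL^{-1}f$, which is exactly what the paper does via \eqref{sdfg}, \eqref{edc}, \eqref{ppa1}, \eqref{ppa2}.
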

\begin{proof}
The kernels of $\qq_\ie '$ are given by
$$\qq_\ie '(x,y) = \int_\e^2\int_0^\8 \int_\Rn \dxi P_t(x-z) V(z) K_s(z,y)\, dz \, ds \, \dtt.$$
For $f\in L^1(\RR^n)$ we have
$$\qq_\ie 'f(x) = \int_\Rn \qq_\ie '(x,y) f(y)\, dy.$$
Note that $\qq_\ie '(x,y)= H_\ie * \phi_y(x)$, where $\phi_y (z) =V(z) \Gamma(z,y)$ and $H_\ie (x)=\int_\e^2 \dxi P_t (x)\dtt$.

It follows from the theory of singular integrals operators that for $g\in L^r(\RR^n)$, $r>1$, the limits $\lim_{\e \to 0}  H_\ie * g(x) = H_i g(x)$ exist for a.e. $x$ and in $L^r(\Rn)$ norm. Obviously, $H_i$ are $L^r(\Rn)$-bounded operators. Moreover,
\begin{align}\label{rtyu}
\Big\|\sup_{0<\e<2} |H_\ie * g|\Big\|_{L^r(\Rn)} \leq C \|g\|_{L^r(\Rn)}.
\end{align}

Notice that for $|z|>1/2$ w have
\begin{equation}\label{ppss}
\sup_{0<\e<2}|H_\ie(z)| \leq C_N|z|^{-N}.
\end{equation}
From \eqref{rtyu} and  \eqref{ppss} we deduce that if $a$ is a function supported in a ball $B(y_0, R)$, $R>1/2$, and $\|a\|_{L^r(\Rn)}\leq \tau |B|^{-1+1/r}$, $r>1$, then
\begin{equation}\label{sdfg}
\Big\|\sup_{0<\e<2} |H_\ie* a|\Big\|_{L^1(\RR^n)}\leq C \tau.
\end{equation}

Using Lemma \ref{Lr} we get that for every $y\in \Rn$ the limit $\lim_{\e\to 0}Q_\ie '(x,y)=Q_i '(x,y)$ exists for a.e. $x\in \Rn$.
The lemma will be proved by using the Lebesque's dominated convergence theorem if we have established that:
\begin{equation}\label{ppa1}
\sup_{y\in \Rn} \int_\Rn \sup_{0<\e<2}|\qq_\ie '(x,y)|\, dx \leq C \quad \text{and}
\end{equation}
\begin{equation}\label{ppa2}
\lim_{\e \to 0} \int_\Rn |\qq_\ie '(x,y)-\qq_i '(x,y)|\, dx =0 \qquad \text{for every }y.
\end{equation}

For fixed $y\in \Rn$ let
$$\phi_1(z)=\phi_y(z)\chi_{B(y,2)}(z), \qquad \phi_k(z)=\phi_y(z)\chi_{B(y,2^k)\backslash B(y,2^{k-1})}(z), \ k\geq 2.$$
Then $\phi_y = \sum_{k=1}^\8 \phi_k$, where the series converges in $L^1(\Rn)$ and $L^r(\RR^n)$ norm for $r$ slightly bigger than 1. Notice that $\supp \, \phi_k \subseteq B(y,2^k)$, $\|\phi_1\|_{L^r(\Rn)}\leq C$, and
\begin{align}
\|\phi_k\|_{L^r(\Rn)}^r &= \int_{B(y,2^k)\backslash B(y,2^{k-1})}V_1(z)^r|z-y|^{(2-n)r}\, dz\leq 2^{k(2-n)r} \int_{B(y,2^k)}V_1(z)^r\, dz\cr
&\leq C 2^{k(2-n)r} 2^{k(n-n_1)} \|V_1\|^r_{L^{rq}(\VVV)} 2^{kn_1/q'}=C(2^k)^{-nr+n+2r-n_1/q}.
\end{align}
Therefore, for $q<n_1/2r$ such that $V_1\in L^{rq}(\VVV)$, we get
\begin{equation}\label{edc}
\|\phi_k\|_{L^r(\Rn)}\leq C |B(y,2^k)|^{-1+1/r} 2^{-\sigma k},
\end{equation}
where $\sigma = n_1/(qr) - 2 >0$.
By using \eqref{sdfg} combined with \eqref{edc} we obtain
\begin{align}
\int_\Rn \sup_{0<\e<2}|\qq_\ie '(x,y)|\, dx &= \int_\Rn \sup_{0<\e<2}|H_\ie \phi_y(x)|\, dx \cr
&\leq \sum_{k=1}^\8 \int_\Rn \sup_{0<\e<2}|H_\ie\phi_k(x)|\, dx \cr
&\leq C \sum_{k=1}^\8 2^{-\sigma k} \leq C,
\end{align}
which implies \eqref{ppa1}, since the last constant $C$ does not depend of $y$. Additionally \eqref{ppa2} is a consequence of \eqref{ppa1} and Lebesque's dominated convergence theorem.
\end{proof}

Now, Theorem \ref{tw3} follows directly by applying \eqref{rrrr}, \eqref{jjj}, and Theorem \ref{tw1}.



\begin{thebibliography}{99}
\baselineskip=6mm

\bibitem{ADM}  P. Auscher, X.T. Duong, A. McIntosh, {\it Boundedness of Banach space valued singular integral operators and Hardy spaces,} unpublished preprint (2005).

\bibitem{BZ}
 F. Bernicot, J. Zhao, {\it New abstract Hardy spaces}, J. Funct. Anal. 255 (2008) 1761-1796.

\bibitem{CZ} W. Czaja, J. Zienkiewicz,  {\it Atomic characterization of the
 Hardy space $H\sp 1\sb L(\mathbb R)$ of one-dimensional Schr\"odinger
 operators with nonnegative potentials,} Proc. Amer. Math. Soc. 136, no. 1 (2008) 89--94.

\bibitem{DZ1} J. Dziuba\'nski, J. Zienkiewicz, {\em Hardy space $H^1$ associated to Schr\"odinger operator satisfying reverse H\"older inequality,} Rev. Mat. Iberoamericana 15 (1999) 279--296.

\bibitem {DZ5} J. Dziuba\'nski, J. Zienkiewicz, {\it Hardy spaces $H\sp 1$ for
Schr\"odinger operators with certain potentials,} Studia Math. 164, no. 1
(2004) 39--53.

\bibitem{DZ} J. Dziuba\'nski, J. Zienkiewicz,
{\it Hardy spaces $H^1$ for Schr\"odinger operators with compactly supported potentials}, Ann. Mat. 184 (2005) 315--326.

\bibitem{DP} J. Dziuba\'nski, M. Preisner,
{\it Riesz transform characterization of Hardy spaces associated with Schrödinger operators with compactly supported potentials}, Ark. Mat. 48, no. 2 (2010) 301--310.

\bibitem{Hof} S. Hofmann, G.Z. Lu, D. Mitrea, M. Mitrea, L.X.Yan,
{\it Hardy spaces associated with non-negative self-adjoint
operators satisfying Davies-Gafney estimates}, Memoirs Amer. Math. Soc. (to appear).

\bibitem{Stein} E. Stein, {\it Harmonic Analysis: Real-Variable
Methods, Orthogonality, and Oscillatory Integrals}, Princeton
University Press, Princeton, NJ, 1993.

\end{thebibliography}
\end{document}